\newtheorem{thm}{Theorem}[section]
\newtheorem{cor}[thm]{Corollary}
\newtheorem{lem}[thm]{Lemma}
\newtheorem{prop}[thm]{Proposition}
\theoremstyle{definition}
\newtheorem{defn}[thm]{Definition}
\theoremstyle{remark}
\numberwithin{equation}{section}
\newcommand{\R}{\mathbb R}
\newcommand{\eps}{\varepsilon}
\newcommand{\p}{\partial}
\newcommand{\comment}[1]{}
\newcommand{\st}{\,{\mbox{ s.t. }}\,}
\newcommand{\J}{\mathcal{J}}
\begin{document}

\title[A fully nonlinear problem with free boundary in the
plane]{A fully nonlinear problem with free boundary\\
in the plane}
\author{Daniela De Silva}
\address{Department of Mathematics, Barnard College, Columbia University, New York, NY 10027}
\email{\tt  desilva@math.columbia.edu}
\author{Enrico Valdinoci}%
\address{Dipartimento di Matematica, Universit\`a di Roma Tor Vergata, Roma, 00133, Italy
 }%
\email{\tt   enrico.valdinoci@uniroma2.it}%


\begin{abstract} We prove that bounded solutions to an overdetermined fully nonlinear free boundary problem
in the plane are one dimensional. Our proof relies on maximum
principle techniques and convexity arguments.
\end{abstract}
\maketitle

\section{Introduction}

Let $\phi_0$, $\phi_1\in C^2 (\R)$ be such that
$\phi_0(t)<\phi_1(t)$ for any $t\in \R$  and let $\Omega$ be the
open set in $\R^2$ trapped between the graphs of $\phi_0$ and
$\phi_1$, i.e.
$$ \Omega:=\big\{ (x_1,x_2)\in \R^2 \st
\phi_0(x_1)< x_2<\phi_1(x_1)\big\}.$$ Define \begin{align*} \J_0
&:=\big\{ (x_1,x_2)\in\R^2\st x_2 = \phi_0(x_1)\big\}, \\
\J_1&:= \big\{ (x_1,x_2)\in\R^2\st x_2=\phi_1(x_1)
\big\}.\end{align*} Notice that $\partial\Omega=\J_0\cup \J_1$.

We consider the following problem
\begin{equation}\label{1}
\left\{
\begin{matrix}
F(D^2 u)=0 & {\mbox{in $\Omega
$,}} \\
u=0 &
{\mbox{on $\J_0$,}}\\
u=1 & {\mbox{on $\J_1$,}}
\end{matrix}
\right.
\end{equation}
where $F$ is a uniformly elliptic fully nonlinear operator with
ellipticity constants $0<\lambda \leq \Lambda$, and $F(0)=0$ (see
\cite{CC} for the definition).

In this paper we prove the following symmetry result about bounded
solutions to the one-phase free boundary problem associated to
\eqref{1}.

\begin{thm}\label{main}
Let $u\in C^2(\Omega)\cap C^1 (\overline\Omega)$ be a solution to
\eqref{1}. Suppose that there exist $c_0,c_1 \in\R$ such that
\begin{equation}\label{L0}
{\mbox{$|\nabla u(x)|=c_0$ for any $x\in \J_0$,}}
\end{equation}
\begin{equation}\label{L1}
{\mbox{$|\nabla u(x)|=c_1$ for any $x\in \J_1$}}.
\end{equation}
Assume also that
\begin{equation}\label{mono}
0<u<1 \,{\mbox{ for any $x\in\Omega$.}}
\end{equation}
Then $c_0=c_1$ and $u$ is a linear function in $\Omega$. In
particular $\J_0$ and $\J_1$ are straight lines.
\end{thm}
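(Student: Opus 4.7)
The plan is the classical $P$-function approach: set $P:=|\nabla u|^2$, show via a maximum principle argument that $P$ is forced to be constant on $\Omega$, and deduce that $u$ is affine.

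\textbf{Step 1 (subsolution property of $P$).} In two dimensions, bounded classical solutions of uniformly elliptic fully nonlinear equations are $C^{2,\alpha}_{\mathrm{loc}}$, so we may differentiate $F(D^2u)=0$: setting $a_{ij}:=F_{ij}(D^2u)$ (uniformly elliptic), each $u_k$ solves the linearized equation $a_{ij}(u_k)_{ij}=0$. A standard computation then yields
\[
a_{ij}P_{ij}\;=\;2\,a_{ij}u_{ik}u_{jk}\;\ge\;2\lambda\,|D^2u|^2\;\ge\;0,
\]
so $P$ is a subsolution of a linear uniformly elliptic operator. The overdetermined conditions \eqref{L0}--\eqref{L1} give $P\equiv c_0^2$ on $\J_0$ and $P\equiv c_1^2$ on $\J_1$.

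\textbf{Step 2 (forcing $P$ constant).} Set $M:=\max(c_0^2,c_1^2)$. Using \eqref{mono} and interior $C^{1,\alpha}$ estimates for fully nonlinear equations, $|\nabla u|$ (hence $P$) is globally bounded; a global maximum principle adapted to the strip geometry (Phragm\'en--Lindel\"of type) then yields $\sup_\Omega P\le M$. Either this supremum is attained at an interior point of $\Omega$, and then the strong maximum principle immediately gives $P\equiv M$; or it is attained only on the boundary component $\J_i$ realizing $M=c_i^2$. In the latter case, Hopf's boundary point lemma yields $\partial_\nu P>0$ at any such boundary maximum; expanding in the $(\tau,\nu)$-frame (where $u_\tau=0$ and $|u_\nu|=c_i$) translates this into a strict sign for $u_{\nu\nu}$, and combining with $F(D^2u)=0=F(0)$ and uniform ellipticity forces a strict sign for the signed curvature $\kappa$ of $\J_i$ at every point. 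The aim is to rule out this scenario by a translation/blow-down compactness argument: along a sequence of interior points where $P\to M$, extract a limiting solution on a half-plane or a flat strip, classify it as one-dimensional via a BCN-type symmetry result, and derive a contradiction from the overdetermined boundary data together with the strict sign of $\kappa$. This is the ``convexity argument'' advertised by the abstract, and it leaves $P\equiv M$ as the only viable alternative.

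\textbf{Step 3 (linearity) and main obstacle.} Once $P\equiv c$, differentiating yields $D^2u\cdot\nabla u=0$, so $\nabla u$ is a null eigenvector of $D^2u$; in dimension two $D^2u$ then has rank at most one, and is positive or negative semidefinite according to the sign of its remaining eigenvalue. Since $F(D^2u)=0=F(0)$, uniform ellipticity (which forces $F(N)>0$ for nonzero $N\ge 0$ and $F(N)<0$ for nonzero $N\le 0$) yields $D^2u\equiv 0$. Hence $u$ is affine, $c_0=c_1$, and $\J_0,\J_1$ are straight lines. The main obstacle lies squarely in Step 2: the unbounded and possibly variable-width strip $\Omega$ prevents a direct application of the maximum principle, and turning the purely local curvature information at a boundary maximum into a global obstruction requires a delicate compactness argument that preserves the overdetermined boundary conditions in the limit. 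Obtaining a clean one-dimensional classification for the limit problem (in a half-plane or a flat strip), and reconciling it with a potentially unequal pair $c_0\neq c_1$, is the crux of the argument.
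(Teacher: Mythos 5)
Your Step 1 and Step 3 are essentially correct (modulo the usual care about differentiating $F$, which the paper also has to handle), but the approach is genuinely different from the paper's, and the crux — Step 2 — contains a circular appeal rather than a proof.

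The paper does not use a $P$-function. Its route is: construct radially symmetric super/subsolution barriers built from quadratic perturbations of one-dimensional profiles (Propositions~\ref{bar}--\ref{bar tilde}); run a sliding argument with these barriers to first determine $c_0=1$ (Proposition~\ref{det c}) and to prove a Lipschitz bound (Lemma~\ref{Lip}); prove a Harnack-type inequality comparing $u$ to a translated one-dimensional profile near the level set $\{u=\sigma\}$ (Lemma~\ref{Harnack}); prove that sliding a one-dimensional linear profile over the convex hull of two large tangent balls in $\{u<\sigma\}$ yields that the whole convex hull lies in $\{u<\sigma\}$ (Proposition~\ref{convexhull}, and its companion Proposition~\ref{lsigma}); deduce that $\{u=\sigma\}$ lies in a strip (Lemma~\ref{halfplane}, Corollary~\ref{star}); and finally run an improvement-of-flatness iteration. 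The ``convexity argument'' advertised in the abstract is precisely Propositions~\ref{convexhull}--\ref{lsigma}, a statement about convex hulls of balls in sublevel sets obtained by sliding — it has nothing to do with the sign of $\partial_\nu P$ or the curvature of $\J_i$, which is how you are reading it.

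The genuine gap in your proposal is Step 2. First, you assert a global bound on $|\nabla u|$ from ``interior $C^{1,\alpha}$ estimates,'' but interior estimates only control $\nabla u$ at bounded distance from $\partial\Omega$ in terms of the oscillation of $u$ there; a \emph{uniform} Lipschitz bound up to the free boundary requires the non-degeneracy built into the overdetermined conditions, and the paper devotes Lemma~\ref{Lip} to exactly this, using a barrier and Hopf on $B_d(x_0)$. Second, the Phragm\'en--Lindel\"of step is not available for free: $\Omega$ is an unbounded region between two arbitrary graphs with, a priori, no upper bound on the vertical width, so there is no obvious auxiliary function to subtract. (In the paper the fact that $\Omega$ lies in a bounded-width strip is itself a hard-won output of Corollary~\ref{star}, not an input.) Third, and most seriously, your mechanism for ruling out a boundary maximum is a blow-down to a half-plane/strip together with a ``BCN-type symmetry result'' classifying the limit as one-dimensional. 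No such classification is available for general uniformly elliptic $F$ in this geometry — producing it \emph{is} Theorem~\ref{main}. As written, Step 2 assumes in the limit the very rigidity you set out to prove. To make a $P$-function argument run here you would have to supply, from scratch, both the global Lipschitz bound and a compactness/classification lemma of comparable strength to the paper's sliding and convex-hull machinery; nothing in the proposal currently does that.
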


Without loss of generality, using Hopf Lemma and a dilation, we
will assume that $c_1=1$.

Problem \eqref{1} may be seen as the fully nonlinear analogue of
the ideal
fluid jet model of \cite{AC}. In this sense,
the PDE in \eqref{1} may be seen as an incompressibility
condition and
the level
sets of
$u$ correspond to the stream lines along which the particles of
the fluid move, and our assumptions say that the stream lines
$\J_0$ and $\J_1$ which comprise the fluid jet
are nice curves.

Also, conditions \eqref{L0} and
\eqref{L1}
may be seen as pressure conditions on the boundary
of the fluid, due to Bernoulli's law.

Differently from the case of \cite{AC}, here the continuity
equation for the incompressible fluid is not given by the standard
Laplacian operator but, more generally, by a fully nonlinear
elliptic one.


Thus, Theorem \ref{main} may be seen as a rigidity result on an
fully nonlinear fluid jet model which determines the shape of the
fluid (i.e., the domain $\Omega$) and the stream lines of the
fluid (i.e., the function $u$), given some information on the
exterior pressure (namely, conditions \eqref{L0} and \eqref{L1}).

The inspiration for Theorem \ref{main} came from the work of the
first author about rigidity results for fully nonlinear phase
transition models \cite{DS} and from the work of the second author
on overdetermined semilinear problems \cite{FV}. We refer to these
papers for further motivation and related questions.
For different rigidity results for fully nonlinear operators
see also~\cite{DM}.

Of course, it would be interesting to know whether the analogue of
Theorem \ref{main} remains true in higher dimensions.

In order to prove our Theorem \ref{main} we show that a given
level set $\{u=\sigma\}$ is contained in a strip of arbitrarily
small width. The main ingredients in the proof are a Harnack type
inequality for the level set $\{u=\sigma\}$ (see Proposition
\ref{Harnack}) and a convexity result which roughly says that if
two balls at unit distance from the $\sigma$ level set of $u$ are
contained in $\{u < \sigma\}$, then their convex hull is also
contained in $\{u < \sigma\}$ (see Proposition \ref{convexhull}).
To obtain these two results, we use the maximum principle together
with appropriate radially symmetric barriers.

The rest of the paper is devoted to the proof of Theorem
\ref{main}. Namely, in Section~2 we introduce some notation and
tools which will be used throughout the paper. In Section~3, we
prove that a solution $u$ as in Theorem \ref{main} must satisfy
the free boundary condition $|\nabla u|=1$ on $\J_0.$ In
Section~4, we prove a Harnack type inequality for the level sets
of $u$ which will be used to show an improvement of flatness for
the $\sigma$ level set. In Section 5, we prove the desired
convexity property mentioned above and we exhibit the proof of
Theorem \ref{main}.

We conclude our introduction with a remark. In Theorem \ref{main}
we do not need to consider classical solutions to the free
boundary problem. Our result still holds for viscosity solutions
(see \cite{CS} for the definition) as long as $\J_0$ and $\J_1$
are continuous graphs.

\section{Main tools}

\subsection{Useful barriers.} In this section we construct suitable
super/subsolutions, by modifying the one-dimensional solutions to
\eqref{1}. The barriers we construct here are inspired by the ones
introduced in \cite{S} and developed in \cite{DS, V1, V2}.
Throughout the paper, constants depending only on the ellipticity
constants $\lambda, \Lambda$ will be called universal constants.




\begin{prop}\label{bar}
Let $\sigma \in[0,1)$. There exist universal constants $C_i>0$
such that if $R \geq C_0$ and
\begin{eqnarray*}&g:[0,\rho_1]\to \R, \quad g(\rho_1)=1, \quad \rho_1 \leq 2 \\& g(s):= \sigma + \left(1+\dfrac{C_1}{R}\right)s
-\dfrac{C_2}{R}s^2\end{eqnarray*} then
\begin{equation*}\beta^+_{R,\sigma}(x):= g(|x|-R)\in C^2(\overline{B_{R_1} \setminus B_{R}}), \quad R_1:=R+\rho_1\end{equation*}
satisfies
\begin{itemize}
\item[{$(i)$\ }] $\beta^+_{R,\sigma}$ is radially
strictly increasing with $$\beta^+_{R,\sigma} =\sigma \quad
\mbox{on $\partial B_{R},$} \quad \beta^+_{R,\sigma} =1 \quad
\mbox{on $\partial B_{R_1}$};$$
\item[{$(ii)$\ }] $F(D^2 \beta^+_{R,\sigma})<0$
in $B_{R_1}\setminus B_{R}$;
\item[{$(iii)$\ }] $|\nabla \beta^+_{R,\sigma}
|\le1+C_3/R$ on $\partial B_{R}$ and $|\nabla \beta^+_{R,\sigma}|
\ge1+C_4/R$ on ${\partial B_{ R_1}}$.

\end{itemize}
\end{prop}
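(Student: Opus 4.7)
The plan is to verify the three items by explicit one-variable computation on the profile $g$, reducing the fully nonlinear estimate (ii) to a Pucci extremal operator bound. All constants will be chosen in the order $C_2, C_1, C_3, C_4, C_0$, each depending only on $\lambda$ and $\Lambda$; the $C^2$ regularity in the annulus is free since $g$ is a polynomial and the origin lies outside $\overline{B_{R_1}\setminus B_R}$.

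First I would determine $\rho_1$ from $g(\rho_1)=1$, which reduces to the quadratic
\[
\frac{C_2}{R}\rho_1^2 - \Big(1+\frac{C_1}{R}\Big)\rho_1 + (1-\sigma) = 0.
\]
For $R$ sufficiently large the smaller root satisfies $\rho_1 = (1-\sigma) + O(1/R)$, so in particular $\rho_1\le 2$. Property (i) is then immediate: $g(0)=\sigma$ and $g(\rho_1)=1$ by construction, and strict monotonicity follows from $g'(s) = 1 + C_1/R - 2C_2 s/R > 0$ on $[0,\rho_1]$ once $R$ is large.

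For (ii), a direct computation in polar coordinates shows that in dimension two the Hessian of $\beta^+_{R,\sigma}$ has eigenvalues $g''(|x|-R) = -2C_2/R$ in the radial direction and $g'(|x|-R)/|x|$ in the angular direction. Since $F(0)=0$ and $F$ is uniformly elliptic, $F(D^2\beta^+_{R,\sigma}) \le \mathcal{M}^+_{\lambda,\Lambda}(D^2\beta^+_{R,\sigma})$, which equals
\[
\Lambda\,\frac{g'(|x|-R)}{|x|} \,-\, \lambda\,\frac{2C_2}{R}.
\]
Using concavity of $g$ to bound $g'(s)\le g'(0) = 1+C_1/R$ together with $|x|\ge R$, this is at most $[\Lambda(1+C_1/R) - 2\lambda C_2]/R$. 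Choosing $C_2$ so that $2\lambda C_2 > \Lambda$ and taking $R\ge C_0$ large enough to absorb the lower-order $\Lambda C_1/R^2$ term then yields (ii). For (iii), I would use $|\nabla \beta^+_{R,\sigma}(x)| = g'(|x|-R)$: at $|x|=R$ this equals $1+C_1/R$, giving the first inequality with $C_3 := C_1$; at $|x|=R_1$ it equals $1+(C_1-2C_2\rho_1)/R$, and substituting the expansion $\rho_1\le (1-\sigma)+O(1/R)\le 1+O(1/R)$ produces $g'(\rho_1) \ge 1 + (C_1 - 2C_2)/R + O(1/R^2)$.

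The main obstacle is the coordination of constants, since the three requirements pull in competing directions: the strong concavity of $g$ needed to keep $g''$ uniformly negative in (ii) simultaneously forces $g'$ to decrease from the inner to the outer sphere, threatening (iii). The remedy is to fix $C_2$ depending only on $\lambda,\Lambda$ so that (ii) holds, then to take $C_1$ substantially larger than $2C_2$ (say $C_1 = 4C_2$) so that the linear term $C_1/R$ still dominates the concavity loss $2C_2\rho_1/R$ at the outer boundary, producing a universal $C_4>0$; finally $C_0$ is chosen large enough to make all $O(1/R^2)$ remainders and the solvability of the quadratic for $\rho_1$ simultaneously controlled.
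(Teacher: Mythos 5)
Your proposal is correct and follows essentially the same route as the paper: solve the quadratic for $\rho_1$, read off $(i)$ from $g'>0$, reduce $(ii)$ in polar coordinates to a Pucci-type bound $\Lambda g'/|x| - 2\lambda C_2/R$ with $C_2 > \Lambda/\lambda$, and compute $g'$ at $s=0$ and $s=\rho_1$ for $(iii)$. The only (harmless) deviation is cosmetic: you track the sharper asymptotic $\rho_1 = (1-\sigma)+O(1/R)$ and hence get away with $C_1 = 4C_2$, whereas the paper just uses $1-\sqrt{1-t}\le t$ to get the cruder bound $\rho_1\le 2$ and therefore requires $C_1\ge 5C_2$ for the outer gradient estimate.
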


\begin{proof} First let us compute $\rho_1$, that is ($R$ large)
 $$\rho_1 = \frac{R+C_1}{2C_2}\left( 1-\sqrt{1-\frac{4 C_2
R(1-\sigma)}{(R+C_1)^2} } \right).$$ Since for small $t \geq 0$,
\begin{equation*}\label{sqrt}1 - \sqrt{1-t}\le t\end{equation*} we easily obtain that for $R$ large
$\rho_1 \leq 2.$ Thus property $(i)$ follows immediately by
choosing $C_1 > 4C_2$.

To check that $(ii)$ holds, we let $s:=|x|-R$ and we notice that
for an orthogonal matrix $O$ we have
\begin{eqnarray*}
F(D^2 \beta^+_{R,\sigma}(x))&=& F\left(O^t\left(
\begin{matrix}
g''(s) & 0 \\ 0 & g'(s)/|x|
\end{matrix}\right)O
\right)\\
&=& F\left(O^t\left(
\begin{matrix}
-2C_2/R & 0 \\ 0 & \big[(1 +C_1/R)-2C_2 s/R \big]/|x|
\end{matrix}\right)O
\right)\\
&\le& \frac{\Lambda}{|x|} \left|1 +\frac{C_1}{R}-2\frac{C_2}{R} s
\right|-\frac{2\lambda C_2}{R}
\\
&\le& \frac{2\Lambda}{R}-\frac{2\lambda C_2}{R}<0
\end{eqnarray*} for $|x| > R$, large $R$ and $C_2 > \Lambda/\lambda.$

Property $(iii)$ also follows immediately, since $$g'(0)=1+C_1/R
\quad  \text{and} \quad g'(\rho_1)= 1+ C_1/R - 2C_2\rho_1/R \geq 1
+ C_2/R$$ as long as $C_1 \geq 5 C_2.$
\end{proof}

Similar arguments also give the following Proposition.

\begin{prop}\label{bar2}
Let $\sigma \in[0,1)$. There exist universal constants
$\tilde{C}_i>0$  such that if $R \geq \tilde C_0$ and
\begin{eqnarray*}&
\tilde g:[\tilde \rho_1,0]\to \R, \quad \tilde g(\tilde \rho_1)=1,
\quad \tilde \rho_1 \geq -3\\& \tilde g(s):= \sigma -
\left(1-\dfrac{\tilde C_1}{R}\right)s -\dfrac{\tilde
C_2}{R}s^2\end{eqnarray*} then
\begin{equation*}\beta^-_{R,\sigma}(x):= \tilde g(|x|- R)\in C^2(\overline{B_{R} \setminus B_{\tilde R_1}}), \quad \tilde R_1:=R+\tilde \rho_1\end{equation*}
satisfies
\begin{itemize}
\item[{$(i)$\ }] $\beta^-_{R,\sigma}$ is radially
strictly decreasing with $$\beta^-_{R,\sigma} =\sigma \quad
\mbox{on $\partial B_{R},$} \quad \beta^-_{R,\sigma} =1 \quad
\mbox{on $\partial B_{\tilde R_1}$};$$
\item[{$(ii)$\ }] $F(D^2 \beta^-_{R,\sigma})>0$
in $B_{R}\setminus B_{\tilde R_1}$;
\item[{$(iii)$\ }] $|\nabla \beta^-_{R,\sigma}
|\ge1-\tilde C_3/R$ on $\partial B_{R}$ and $|\nabla
\beta^-_{R,\sigma}| \le 1-\tilde C_4/R$ on ${\partial B_{ \tilde
R_1}}$.\end{itemize}
\end{prop}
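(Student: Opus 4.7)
The plan is to mirror the proof of Proposition \ref{bar} with the obvious sign changes reflecting that $\beta^-_{R,\sigma}$ is radially decreasing on the smaller annulus $B_R \setminus B_{\tilde R_1}$.

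First I would solve the quadratic equation $\tilde g(\tilde \rho_1)=1$ explicitly for $\tilde \rho_1$ and apply the same elementary inequality $1-\sqrt{1-t}\le t$ (valid for small $t\ge 0$) used in Proposition \ref{bar} to bound $|\tilde \rho_1|$ by a universal constant of order $1-\sigma$ when $R\ge \tilde C_0$ is large; in particular $\tilde \rho_1\ge -3$. Property (i) then reduces to checking that $\tilde g'(s)<0$ throughout $[\tilde \rho_1,0]$, which is an immediate algebraic verification from the explicit formula for $\tilde g'$ once $|\tilde \rho_1|$ is under control and $R$ is sufficiently large; the boundary values $\sigma$ on $\partial B_R$ and $1$ on $\partial B_{\tilde R_1}$ are built into the definition.

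For (ii), I would diagonalize the Hessian exactly as in the proof of Proposition \ref{bar}, writing $D^2\beta^-_{R,\sigma}$ as an orthogonal conjugate of the diagonal matrix with entries $\tilde g''(s)$ (radial) and $\tilde g'(s)/|x|$ (tangential), where $s=|x|-R$. Applying a Pucci bound of the form $F(M)\ge \mathcal{M}^-_{\lambda,\Lambda}(M)$ then reduces the verification of $F(D^2\beta^-_{R,\sigma})>0$ to a one-line arithmetic inequality of order $1/R$, which is closed by taking $\tilde C_2$ large compared with $\Lambda/\lambda$ so that the ``good'' direction dominates. Property (iii) is immediate by evaluating $\tilde g'(s)$ at $s=0$ (which gives the gradient on $\partial B_R$) and at $s=\tilde \rho_1$ (which gives the gradient on $\partial B_{\tilde R_1}$) and comparing with the claimed bounds $1-\tilde C_3/R$ and $1-\tilde C_4/R$.

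The main obstacle is keeping the five universal constants $\tilde C_0,\dots,\tilde C_4$ mutually consistent: $\tilde C_2$ must be chosen first so that the Pucci estimate in (ii) is strictly positive; then $\tilde C_1$ has to be chosen large compared with $\tilde C_2$, both to preserve $\tilde g'<0$ on the whole of $[\tilde \rho_1,0]$ and to make the gradient estimates in (iii) have the correct orientation (one-sided from above at the inner sphere); $\tilde C_3$ and $\tilde C_4$ are then read off from (iii); and finally $\tilde C_0$ absorbs all the ``$R$ sufficiently large'' thresholds arising from the quadratic inversion, the monotonicity check, and the ellipticity estimate. Since the template has already been laid out in the proof of Proposition \ref{bar}, this is essentially a bookkeeping exercise rather than a genuine new difficulty.
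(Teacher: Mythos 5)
Your overall plan is the right one and matches what the paper intends (the paper omits the proof, remarking only that ``Similar arguments also give the following Proposition''). There is, however, a genuine issue hiding inside the phrase ``the obvious sign changes,'' and your sketch does not actually implement the sign change that matters. As printed, $\tilde g(s)=\sigma-(1-\tfrac{\tilde C_1}{R})s-\tfrac{\tilde C_2}{R}s^2$ gives $\tilde g''=-2\tilde C_2/R<0$ and $\tilde g'<0$ on $[\tilde\rho_1,0]$, so \emph{both} eigenvalues of $D^2\beta^-_{R,\sigma}$, namely $\tilde g''(s)$ and $\tilde g'(s)/|x|$, are negative. But then $D^2\beta^-_{R,\sigma}\le 0$, and uniform ellipticity with $F(0)=0$ forces $F(D^2\beta^-_{R,\sigma})\le F(0)=0$, flatly contradicting~$(ii)$. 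Your Pucci step ``closes because the good direction dominates'' silently assumes there \emph{is} a positive eigenvalue; that requires the quadratic coefficient to flip sign, i.e.\ $\tilde g(s)=\sigma-(1-\tfrac{\tilde C_1}{R})s+\tfrac{\tilde C_2}{R}s^2$, making $\tilde g$ convex with $\tilde g''=2\tilde C_2/R>0$. With the corrected sign, $\mathcal M^-(D^2\beta^-_{R,\sigma})=\lambda\,\tilde g''+\Lambda\,\tilde g'/|x|\ge 2\lambda\tilde C_2/R-2\Lambda/R>0$ once $\tilde C_2>\Lambda/\lambda$, and the rest of your bookkeeping goes through.

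Two further corrections to your sketch that follow from this. First, with the corrected sign the radicand in the formula for $\tilde\rho_1$ is $1+t$ rather than $1-t$, so the elementary inequality you want is $1-\sqrt{1+t}\ge -t$ (exactly the one used in the proof of Proposition~\ref{bar tilde}), not $1-\sqrt{1-t}\le t$. Second, the orientation of the gradient bound on $\partial B_{\tilde R_1}$ is more delicate than you suggest: one computes $|\tilde g'(\tilde\rho_1)|=1-\tilde C_1/R+2\tilde C_2|\tilde\rho_1|/R$, so the requirement $|\nabla\beta^-_{R,\sigma}|\le 1-\tilde C_4/R$ on $\partial B_{\tilde R_1}$ forces $\tilde C_1>2\tilde C_2|\tilde\rho_1|+\tilde C_4$; since $|\tilde\rho_1|\le 2$ (after the quadratic-inversion estimate), taking $\tilde C_1\ge 5\tilde C_2$ and $\tilde C_4=\tilde C_2$ works, in parallel with the constant choices in Proposition~\ref{bar}. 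Everything else in your outline --- diagonalizing the Hessian in an orthonormal radial/tangential frame, reading off $(iii)$ from $\tilde g'(0)$ and $\tilde g'(\tilde\rho_1)$, and letting $\tilde C_0$ absorb the various largeness thresholds --- is exactly the intended argument.
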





When $\sigma\in (0,1)$, it is useful to extend the barrier
$\beta^+_{R,\sigma}$ also to values below $\sigma$, as in the
following result.

\begin{prop}\label{bar tilde}
Let $\sigma \in (0,1)$. There exist universal constants $\bar
C_i>0$ such that if $R \geq \bar C_0$ and
\begin{eqnarray*}&\bar g:[\rho_0,\rho_1]\subset [-2,2]\to \R, \quad \bar g(\rho_0)=0, \quad \bar g(\rho_1)=1, \quad
\rho_0 < 0 < \rho_1\\
& \bar g(s):= \sigma + \left(1+sign(s)\dfrac{ \bar C_1}{R}\right)s
-\dfrac{\bar C_2}{R}s^2\end{eqnarray*} then
\begin{eqnarray*}&\bar \beta_{R,\sigma}(x):= \bar g(|x|-R)\in C^2((\overline{B_{R_1} \setminus B_{R_0}}) \setminus \partial
B_R),\\
& R_1:=R+\rho_1, \quad R_0: = R + \rho_0, \end{eqnarray*}
satisfies
\begin{itemize}
\item[{$(i)$\ }] $\bar \beta_{R,\sigma}$ is radially
strictly increasing with $$\bar \beta_{R,\sigma} =0 \quad \mbox{on
$\partial B_{R_0}$}, \quad  \bar \beta_{R,\sigma} =\sigma \quad
\mbox{on $\partial B_{R},$} \quad \bar \beta_{R,\sigma} =1 \quad
\mbox{on $\partial B_{R_1}$};$$
\item[{$(ii)$\ }] $F(D^2 \bar \beta_{R,\sigma})<0$
in $(B_{R_1}\setminus B_{R_0}) \setminus \p B_R$;
\item[{$(iii)$\ }] $|\nabla \bar \beta_{R,\sigma}
|\le1 - \bar C_3/R$ on $\partial B_{R_0}$ and $|\nabla \bar
\beta_{R,\sigma}| \ge1+ \bar C_4/R$ on ${\partial B_{ R_1}}$.

\end{itemize}
\end{prop}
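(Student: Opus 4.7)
The plan is to mirror the proof of Proposition \ref{bar}, extending the construction across the kink at $s=0$. The key new feature is the factor $\mathrm{sign}(s)$ in the linear coefficient, which produces a jump $\bar g'(0^+)-\bar g'(0^-)=2\bar C_1/R>0$ in the radial derivative across $\partial B_R$. This upward corner is precisely what is consistent with a supersolution of $F=0$, and since $\partial B_R$ is explicitly removed from the domain of validity in (ii), the kink never needs to be analyzed directly: on each of the two open annuli $B_R\setminus B_{R_0}$ and $B_{R_1}\setminus B_R$, the function $\bar\beta_{R,\sigma}$ is smooth and the PDE inequality is checked pointwise.

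First I would pin down $\rho_0$ and $\rho_1$ by solving the two quadratics $\bar g(\rho_0)=0$ and $\bar g(\rho_1)=1$. The elementary bounds $1-\sqrt{1-t}\le t$ and $\sqrt{1+t}-1\le t/2$ for small $t\ge 0$, applied respectively to the positive and negative roots, give $0<\rho_1\le 2(1-\sigma)\le 2$ and $0<-\rho_0\le 2\sigma/(1-\bar C_1/R)\le 2$, provided $R\ge \bar C_0$ for some multiple of $\bar C_1$. So $[\rho_0,\rho_1]\subset[-2,2]$ uniformly in $\sigma\in(0,1)$. Strict monotonicity on each half then follows from $\bar g'(s)=1+\mathrm{sign}(s)\bar C_1/R-2\bar C_2 s/R$, which is positive at $s=0^\pm$ and, using $|s|\le 2$, remains positive at the endpoints for $R$ large.

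For (ii), I would diagonalize $D^2\bar\beta$ in the radial/tangential frame, obtaining eigenvalues $\bar g''(s)=-2\bar C_2/R$ and $\bar g'(s)/|x|>0$. Since $F(0)=0$ and $F$ is uniformly elliptic,
$$F(D^2\bar\beta)\le \Lambda\,\frac{\bar g'(s)}{|x|}-\frac{2\lambda \bar C_2}{R}.$$
With $\bar g'(s)$ bounded by a universal constant and $|x|\ge R-2\ge R/2$ for $R\ge \bar C_0$, the right-hand side is strictly negative once $\bar C_2$ is chosen larger than a universal multiple of $\Lambda/\lambda$. This is identical in spirit to the computation in Proposition \ref{bar}; the only difference is that the bound must be verified separately on each half-annulus.

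For (iii), $|\nabla\bar\beta|=|\bar g'(s)|$ on each boundary sphere. A direct calculation yields
$$\bar g'(\rho_1)=1+\frac{\bar C_1}{R}-\frac{2\bar C_2\rho_1}{R},\qquad \bar g'(\rho_0)=1-\frac{\bar C_1}{R}+\frac{2\bar C_2|\rho_0|}{R}.$$
Using $\rho_1\le 2$ and $|\rho_0|\le 2$, the desired bounds $\bar g'(\rho_1)\ge 1+\bar C_4/R$ and $\bar g'(\rho_0)\le 1-\bar C_3/R$ both reduce to arithmetic inequalities of the form $\bar C_1-4\bar C_2\ge \max(\bar C_3,\bar C_4)$. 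The only real issue — and the reason the statement assumes nothing more than $R\ge\bar C_0$ — is bookkeeping: the constants must be fixed in order, first $\bar C_2$ from the ellipticity step, then $\bar C_1$ much larger than $\bar C_2$ so that both gradient bounds and the uniform estimate $|\rho_i|\le 2$ go through, then $\bar C_3$ and $\bar C_4$ as fixed fractions of $\bar C_1-4\bar C_2$, and finally $\bar C_0$ large enough in terms of $\bar C_1$ and $\bar C_2$ to absorb all lower-order error terms.
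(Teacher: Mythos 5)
Your proposal is correct and follows essentially the same route as the paper: reduce to the new half-annulus $s\in[\rho_0,0]$, solve the quadratic defining $\rho_0$, and then repeat verbatim the diagonalization-plus-ellipticity estimate and the endpoint gradient check from Proposition~\ref{bar}, with the sign flip in the linear term handled exactly as you describe. One minor arithmetic slip worth fixing: from $\sqrt{1+t}\le 1+t/2$ one gets $-\rho_0\le \sigma/(1-\bar C_1/R)$, not $2\sigma/(1-\bar C_1/R)$; the version you wrote is not $\le 2$ uniformly in $\sigma\in(0,1)$, whereas the sharper constant gives $-\rho_0\le 2\sigma\le 2$ as soon as $R\ge 2\bar C_1$.
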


\begin{proof}
In view of Proposition \ref{bar}, we only need to focus on the
case $s \in [\rho_0, 0]$ and check that the properties above hold
by possibly choosing the constants $\bar C_1, \bar C_2$ larger
than $C_1, C_2$. The proof follows from similar computations as in
the proof of Proposition \ref{bar}. We sketch it for completeness.

Property $(i)$ is obvious, as long as $R$ is large.

Again, one can easily compute that
$$\rho_0 = \frac{R+\bar C_1}{2C_2}\left(1- \sqrt{1 + 4\frac{\bar C_2 \sigma}{(R+\bar C_1)^2} }\right).$$
Since for $t \geq 0$ we have
$$1-\sqrt{1+t} \geq -t,$$ we can estimate that $$\rho_0 \geq
-2.$$

To check that $(ii)$ holds, again we let $s:=|x|-R$ then, for an
orthogonal matrix $O$, we have
\begin{eqnarray*}
F(D^2 \bar \beta_{R,\sigma}(x))&=& F\left(O^t\left(
\begin{matrix}
\bar g''(s) & 0 \\ 0 & \bar g'(s)/|x|
\end{matrix}
\right)O\right)\\
&\le& \frac{\Lambda}{|x|} \left|1 -\frac{\bar C_1}{R}-2\frac{\bar
C_2}{R} s \right|-\frac{2\lambda \bar C_2}{R}
\\
&\le& \frac{2\Lambda}{R}-\frac{2\lambda \bar C_2}{R}<0
\end{eqnarray*} for $|x| > R_0$, large $R$, and $\bar C_1 > 4 \bar C_2, \bar C_2 > \Lambda/\lambda.$

Property $(iii)$ also follows immediately, since $$\bar g'(\rho_0
)=1-\bar C_1/R - 2 \bar C_2   \rho_0/ R \le 1 - \bar C_2/R$$ as
long as $\bar C_1 \geq 5 \bar C_2.$
\end{proof}

{F}rom now on we extend $\bar \beta_{R,\sigma}$ to be 0 in
$B_{R_0}.$ Also, sometimes we think that $\bar \beta_{R,\sigma}$
is extended to 1 outside of $B_{R_1}$. This will be clear from the
context.

\

\noindent \textbf{Remark.} Notice that, since $-2 \leq \rho_0 <
\rho_1 \leq 2$ one has
\begin{equation}\label{gbound}|\bar g(s) - (s+\sigma)| \leq
\frac{\bar C_5}{R}, \quad s \in [\rho_0, \rho_1].\end{equation}In
particular,
evaluating \eqref{gbound} at $s=\rho_0$
and at $s=\rho_1$,
we see that
\begin{equation}\label{2.1a}
\begin{split}&
R -\sigma- \frac{\bar
C_5}{R} \leq R_0 \leq R -\sigma +\frac{\bar C_5}{R},\\
&
R+ (1-\sigma)- \frac{\bar C_5}{R}
\leq R_1 \leq R+ (1-\sigma)+\frac{\bar C_5}{R}.\end{split}
\end{equation}
Define $T_0^1$ to be the truncation at levels 0 and 1, i.e.
\begin{equation}\label{t01}T_0^1(t):= \begin{cases}0 & \text{if $t
< 0 $}\\
t & \text{if $0 \leq t \leq 1$}\\ 1 & \text{if $t>1.$}
\end{cases}\end{equation} Then from \eqref{gbound} it follows that $$T_0^1(s + \sigma + C_5/R) \geq \bar
g(s) \quad \text{on $[\rho_0, \rho_1],$}$$ and hence
\begin{equation}\label{bbound} T_0^1(|x|-R + \sigma + C_5/R) \geq
\bar \beta_{R,\sigma}(x) \quad \text{in $\R^2$}.
\end{equation}

\

 Clearly, using similar arguments, also the
barrier $\beta^-_{R,\sigma}$ can be extended below $\sigma$ to a
barrier $\underline {\beta}_{R,\sigma}$. We omit its precise
definition, since we do not explicitly need it here.

\subsection{Sliding method.} In what follows, the barriers constructed above and
one-dimensional solutions to \eqref{1} will be used as comparison
functions in what we call the sliding method. For the sake of
clarity, we fix here some notation and terminology which we will
use throughout the rest of the paper.

Let $u$ be a solution to \eqref{1}-\eqref{mono}. {F}rom now on any
such $u$ is extended to be 0 below $\J_0$ and 1 above $\J_1$ (when
not specified ``below" and ``above" are intended with respect to
the $e_2$ direction).

Given a continuous function $v : K \to \R$, $K$ compact, we set
\begin{equation}\label{translate}
v_t(x):= v(x-te_2), \quad t \in \R.
\end{equation}
We say that $v$ is above $u$ if $v \geq u$ in $K$. Also, we say
that $v$ touches $u$ from above if $v$ is above $u$ and $v_t$ is
not above $u$ for any $t>0.$ Clearly, this implies that
$u(x_0)=v(x_0)$ at some $x_0 \in K \cap \overline{\Omega}$ which
we call a contact point.

The sliding method can be roughly described as follows. Assume
that for some $t_0$ we have that $v_{t_0} \geq u.$ Then by
continuity, there exists the smallest $\bar t \geq t_0$ such that
$v_{\bar t}$ touches $u$ from above. If $v \neq u$ is a
supersolution to our equation, then we can apply the maximum
principle to conclude that contact points are in $\p K$. If we
also know for example that $|\nabla v| \leq c_0$ (resp. $|\nabla
v| \geq 1 $) on $\{v =0\}$ (resp. on $\{v=1\}$) then we can
conclude from Hopf Lemma that no contact points occur on $\{v=0\}$
(resp. on $\{v=1\}$) unless $v=u$.

This method will be a key tool in all our proofs.

\section{Determining $c_0$}

The purpose of this section is to determine $c_0$, that is the
exterior pressure of the fluid on $\J_0$, by knowing the pressure
on $\J_1$. Precisely, we prove the following Proposition.

\begin{prop}\label{det c}
Let $u\in C^2(\Omega)\cap C^1 (\overline\Omega)$ be a solution to
\eqref{1}-\eqref{mono}. Then $c_0=1$.
\end{prop}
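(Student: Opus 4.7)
The strategy is to pin down $c_0$ by sliding the radial barriers $\beta^\pm_{R,0}$ from Section~2 against $u$ and extracting Hopf-type gradient inequalities at the first contact, then letting $R\to\infty$ so the inequalities become exact. I would run parallel arguments: the supersolution $\beta^+_{R,0}$ gives the upper bound $c_0\le 1$, the subsolution $\beta^-_{R,0}$ gives the lower bound $c_0\ge 1$, and the normalization $c_1=1$ plays an essential role in both.

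For $c_0\le 1$, fix $R$ large, pick $y_1\in\R$, and consider the translates $v_t(x):=\beta^+_{R,0}(x-z_t)$ with $z_t := (y_1,\,-L+t)$. For $L\gg 1$ the annular support $A_{z_0}$ lies entirely below $\J_0$, where $u\equiv 0$; hence $v_0 \ge u$ trivially on $A_{z_0}$. Slide upward and let $\bar t$ be the first $t$ at which $v_t\ge u$ saturates on $A_{z_t}\cap\overline\Omega$, with contact $x^*$. The strict supersolution property $F(D^2\beta^+)<0$ and the strong maximum principle (applied to the linearized equation for $v_t-u$, which is available since $F$ is uniformly elliptic) rule out $x^*$ in the interior of the annulus. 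A contact on the outer sphere $\p B_{R_1}(z_{\bar t})$ would force $u(x^*)=1$, hence $x^*\in\J_1$; Hopf at $x^*$, with the inward direction to the annulus pointing from $\J_1$ into $\Omega$, would give $|\nabla v_t(x^*)|\le|\nabla u(x^*)|=c_1=1$, contradicting property~(iii) of Proposition~\ref{bar} which yields $|\nabla v_t|\ge 1+C_4/R>1$ on $\p B_{R_1}$. Thus the first contact is a tangent point $x^*\in\p B_R(z_{\bar t})\cap\J_0$, where $\beta^+$ and $u$ both vanish, and Hopf gives
\[
c_0 = |\nabla u(x^*)| \;\le\; |\nabla v_t(x^*)| \;\le\; 1 + \frac{C_3}{R},
\]
so letting $R\to\infty$ yields $c_0 \le 1$.

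For $c_0\ge 1$, I would run the mirror argument with $\beta^-_{R,0}$, slid downward from a center located far above $\J_1$ where initially $\beta^-\le 1\equiv u$ on its annular support. The roles of the spheres swap: contact on the inner sphere $\p B_{\tilde R_1}(z_{\bar t})$ could only occur at a point of $\J_1$, and Hopf there would give the forbidden inequality $1 = c_1 = |\nabla u(x^*)| \le |\nabla\beta^-(x^*)| \le 1 - \tilde C_4/R$ via property~(iii) of Proposition~\ref{bar2}. Therefore the first contact is forced onto the outer sphere $\p B_R(z_{\bar t})$ tangent to $\J_0$ from above, and Hopf yields $c_0 \ge 1 - \tilde C_3/R$. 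Sending $R\to\infty$ gives $c_0 \ge 1$, and combining the two bounds, $c_0 = 1$.

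The principal obstacle is making the sliding rigorous: the annular support of each barrier has width $\approx 1$, comparable with the local strip width $h = \phi_1-\phi_0$, and one must ensure that the slide can be pushed up to a tangent contact with $\J_0$ without prior failure of the comparison somewhere inside the annulus. The Hopf contradiction at the \emph{wrong} free boundary (using $c_1=1$ together with the $O(1/R)$ gradient gap of the barriers against the value $1$) is precisely what prevents such a premature failure, possibly after a careful choice of $y_1$ and a limiting argument as $R\to\infty$. A secondary delicacy is justifying the comparison principle across the free boundaries of $u$: since $c_0>0$ by Hopf applied to $u$ in $\Omega$, no $C^2$ function can touch $u$ from above at $\J_0$, so the extension by $0$ below $\J_0$ is vacuously a viscosity subsolution there, and the analogous observation at $\J_1$ (using $c_1=1$) places the problem in the standard viscosity-comparison framework to which the sliding argument applies.
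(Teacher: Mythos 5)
Your proposal reproduces the paper's proof almost verbatim: slide $\beta^+_{R,0}$ from below $\J_0$ to a first touching point, rule out contact on the outer sphere via $c_1=1$ together with Proposition~\ref{bar}$(iii)$, get $c_0 \le 1 + C_3/R$ at the $\J_0$ contact, then run the mirror argument with $\beta^-_{R,0}$ from above $\J_1$ to get the matching lower bound, and send $R \to \infty$. This is exactly the argument in the paper (with at most a cosmetic difference in how the sliding center is parametrized), so nothing further to add.
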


\begin{proof}
We apply the sliding method with the comparison function
$v=\beta^+_{R,0}$ in Proposition \ref{bar} (we use the notation in
that Proposition).

Since $u=0$ below $\J_0$ and $v \geq 0$ in $B_{R_1}$ there exists
a $t_0 < 0$ such that $v_{t_0}$ is above $u$. Let $\bar t$ be the
smallest $t \geq t_0$ such that $v_{ t}$ touches $u$ from above.

According to Proposition \ref{bar}$(ii)$, $v_{\overline t}$ is
a strict supersolution in $B_{R_1} \setminus B_{R}$, thus by the
comparison principle if $\bar x$ is a contact point then
\begin{equation}\label{Si}
\overline x \in \partial\Big( B_{R_1}(0, \overline t)\setminus
B_{R}(0,\overline t)\Big) .\end{equation} We now show that
\begin{equation}\label{No}
\overline x \not\in \partial B_{R_1}(0, \overline t).
\end{equation}
Indeed suppose by contradiction that $ \overline x \in \partial
B_{R_1}(0, \overline t). $ Then, by Proposition \ref{bar}$(i)$, we
have $v_{\overline t}(\overline x)=1$ and so since $0<u<1$ in
$\Omega$,
\begin{equation}\label{8s8} \overline x \in \J_1.\end{equation} Also, if
$\nu$ is the exterior normal of $\partial B_{R_1}(0,\overline t)$
we get
\begin{equation*}\label{8s9}
\partial_\nu v_{\overline t}(\overline x)
\le \partial_\nu u(\overline x).\end{equation*} Then, the
inequality above together with Proposition \ref{bar}$(i)$,
\eqref{L1}, \eqref{8s8}, give
$$ |\nabla v_{\overline t}(\overline x)|=
\partial_\nu v_{\overline t}(\overline x)
\le 1.$$  Since $ \overline x \in \partial B_{R_1}(0, \overline
t)$, this inequality
contradicts Proposition \ref{bar}$(iii)$, and
so \eqref{No} is proved.

Therefore by \eqref{Si} and \eqref{No} we obtain that
\begin{equation}\label{09}\overline x
\in \partial B_{ R}(0, \overline t) .\end{equation} Thus,
by Proposition \ref{bar}$(i)$, we have that $\overline x\in \J_0$
and
so, by \eqref{L0}, \eqref{09} and Proposition \ref{bar}$(i, iii)$,
we conclude that
\begin{equation}\label{gg}
c_0=|\nabla u(\overline x)|\le |\nabla v_{\overline t}(\overline
x)|\le 1+\frac{C_3}{R}.
\end{equation}

Now we perform a (upside-down) sliding argument with comparison
function $w=\beta^-_{R,0}$ (recall Proposition~\ref{bar2}).

Since $u=1$ above $\J_1$, there exists $t_0>0$ such that $u$ is
above $w_{t_0}.$ Let $\underline t$ be the largest $t \leq t_0$
such that $u$ touches $w_{t}$ from above.

According to Proposition \ref{bar2}$(ii)$  $w_{\underline t}$ is
a strict subsolution thus, by the comparison principle, we know
that if $\underline x$ is a contact point then
\begin{equation}\label{Si2}
\underline x \in \partial\Big( B_R(0,\underline t) \setminus B_{
\tilde R_1}(0,\underline t)\Big) .\end{equation} As before, we
deduce that
\begin{equation}\label{No2}
\underline x \not\in \partial B_{\tilde R_1}(0,\underline t)
.\end{equation} Indeed, if \eqref{No2} were false, we would have
that $u(\underline x)=1$ thanks to Proposition \ref{bar2}$(i)$.
Accordingly, from \eqref{L1} and Proposition \ref{bar2}$(iii)$,
$$ 1=|\nabla u(\underline x)|
\le |\nabla w_{\underline t}(\underline x)|\le 1-\frac{\tilde
C_3}{R}.$$ This contradiction proves \eqref{No2}.

Then, from \eqref{Si2} and \eqref{No2}, we deduce that $\underline
x \in \partial B_{ R}(0,\underline t)$ and so, from Proposition
\ref{bar2}$(i)$ and \eqref{mono}, that $\underline x\in \J_0$.

Finally, \eqref{L0} and Proposition \ref{bar2}$(i, iii)$ give that
$$ c_0=|\nabla u(\underline x)|
\ge |\nabla w_{\underline t}(\underline x)|\ge 1-\frac{\tilde
C_4}{R}.$$ This and \eqref{gg} imply that $c_0=1$, by taking $R$
arbitrarily large.
\end{proof}

\section{Level set analysis}

In this section we wish to prove a Harnack type inequality for the
level sets of a solution $u$ to our problem. The techniques we use
here and in the next section have been inspired by \cite{DS}.

{F}rom now on we denote by $u$ a solution to
\begin{equation}\label{2}
\left\{
\begin{matrix}
F(D^2 u)=0 & {\mbox{in $\Omega
$,}} \\
u=0, |\nabla u|=1 &
{\mbox{on $\J_0$,}}\\
u=1, |\nabla u|=1 & {\mbox{on $\J_1$.}}
\end{matrix}
\right.
\end{equation}
with $0<u<1$ in $\Omega$. As usually $u$ is extended to be 0 below
$\J_0$ and 1 above $\J_1.$

As in the case of the one-phase problem in \cite{CS}, one obtains
that $u$ is Lipschitz continuous with universal bound.

\begin{lem}\label{Lip}$u$ is uniformly Lipschitz continuous, i.e $$\|\nabla u\|_{L^\infty} \leq K$$ with $K$ universal constant.\end{lem}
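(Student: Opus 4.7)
The plan is to adapt to our fully nonlinear setting the Lipschitz estimate for the one-phase problem from \cite{CS}, substituting the radial harmonic barriers used there by the functions $\beta^\pm_{R,\sigma}$ constructed in Section~2. The argument reduces to two ingredients: a linear growth estimate from each free boundary, and the interior $C^{1,\alpha}$ theory for $F(D^2 v)=0$.

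The central step will be to prove that there is a universal constant $K_0$ such that
\[ u(x) \le K_0\, \mathrm{dist}(x, \J_0), \qquad 1 - u(x) \le K_0\, \mathrm{dist}(x, \J_1) \]
for all $x \in \R^2$. I will only sketch the first; the second is symmetric, using Proposition~\ref{bar2} in place of Proposition~\ref{bar}. Fix $R$ universal (large) and set $v := \beta^+_{R, 0}$, extended by $0$ in $B_R$ and $1$ outside $B_{R_1}$, so that $v$ is a viscosity supersolution of $F(D^2 v) = 0$ on $\R^2$ by Proposition~\ref{bar}$(ii)$. For $x_0 \in \Omega$, apply the sliding method to a one-parameter family of translates of $v$, initialized far below $\J_0$ so that the translate lies above $u$ (with $u$ extended by $0$ below $\J_0$), and slide upward until first contact at time $\bar t$. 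Just as in the proof of Proposition~\ref{det c}, the contact point $\bar x$ cannot lie in the interior of the annulus (strict supersolvability) or on $\p B_{R_1}$ (because Proposition~\ref{bar}$(iii)$ forces $|\nabla v| \ge 1 + C_4/R > 1 = |\nabla u|$ on $\J_1$), so $\bar x$ lies on $\J_0 \cap \p B_R$ of the final ball. The resulting global comparison $u \le v_{\bar t}$, combined with the linear upper bound $v(y) \le (1 + C_1/R)(|y| - R)^+$ in the annulus, yields $u(x_0) \le K_0\, \mathrm{dist}(x_0, \J_0)$ once the sliding direction is chosen adapted to the nearest-point projection of $x_0$ onto $\J_0$.

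With linear growth in hand, the Lipschitz bound follows from interior gradient estimates. For $x_0 \in \Omega$, set $r := \mathrm{dist}(x_0, \p\Omega)$ and assume without loss of generality the minimum is attained on $\J_0$. Then $B_r(x_0) \subset \Omega$ and, for every $y \in B_r(x_0)$,
\[ u(y) \le K_0\, \mathrm{dist}(y, \J_0) \le K_0\,(r + |y - x_0|) \le 2 K_0\, r. \]
The interior $C^{1,\alpha}$ estimates for uniformly elliptic fully nonlinear equations (\cite{CC}) applied to $F(D^2 u) = 0$ in $B_r(x_0)$ give
\[ |\nabla u(x_0)| \le \frac{C}{r}\, \|u\|_{L^\infty(B_r(x_0))} \le 2CK_0, \]
a universal bound. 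If the distance to $\p\Omega$ is realized on $\J_1$ one uses $1 - u$ instead, yielding the same bound.

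The main obstacle is the linear growth step. Since $\phi_0 \in C^2(\R)$ only pointwise (without a uniform bound on $\phi_0''$), one cannot a priori place a tangent ball of fixed radius $R$ inside $\{u = 0\}$ at a prescribed boundary point; the sliding method bypasses this, since the comparison principle and Hopf's lemma force the first contact onto $\J_0$ regardless of local curvature. The delicate task is to set up the sliding geometry (direction and initial center) so that the final comparison $v_{\bar t}(x_0) \le (1 + C_1/R)(|x_0 - \mathrm{center}| - R)$ can be converted into a bound in terms of $\mathrm{dist}(x_0, \J_0)$. This is precisely the argument carried out in \cite{CS} for the Laplacian, which transfers to the present context once the harmonic radial barriers there are replaced by those of Propositions~\ref{bar}--\ref{bar2}.
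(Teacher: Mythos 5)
Your approach is genuinely different from the paper's, and it has a gap in the linear-growth step. The paper does \emph{not} slide the barriers of Section~2 here. Instead, it fixes $d:=\mathrm{dist}(x_0,\partial\Omega)$, rescales $v(x):=\tfrac1d u(x_0+dx)$ on $B_1$, applies the interior Harnack inequality to get $v\geq c\,v(0)$ on $B_{1/2}$, compares with a radial power subsolution $g(x)=\tfrac{cv(0)}{2^{-\beta}-1}(|x|^{\beta}-1)$ on $B_1\setminus\overline{B_{1/2}}$, and then reads off, at the touching point $x_1\in\partial B_1\cap\J_0$ where $|\nabla v(x_1)|=1$, the Hopf-type bound $1\geq v_\nu(x_1)\geq g_\nu(x_1)\geq Cv(0)$; this gives $u(x_0)\leq Cd$, after which Harnack and $C^{1,\alpha}$ estimates close the argument. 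Crucially, the only ball this uses is the interior ball $B_d(x_0)\subset\Omega$, which exists by definition of $d$ and needs no curvature control on $\J_0$.

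The gap in your version is exactly where you flag it but then wave away. Sliding $\beta^+_{R,0}$ (in $e_2$, or in any fixed direction) until first contact produces a ball $B_R(\mathrm{center}_{\bar t})\subset\{u=0\}$ tangent to $\J_0$ at \emph{some} point, and the comparison $u\leq v_{\bar t}$ gives $u(x_0)\leq(1+C_1/R)\big(|x_0-\mathrm{center}_{\bar t}|-R\big)^+$. But since $B_R(\mathrm{center}_{\bar t})\subset\{u=0\}$ and the nearest point of $\{u=0\}$ to $x_0$ lies on $\J_0$, one always has $|x_0-\mathrm{center}_{\bar t}|-R\geq\mathrm{dist}(x_0,\J_0)$, with no upper bound in terms of $\mathrm{dist}(x_0,\J_0)$: without a uniform bound on $\phi_0''$ the contact point can be far from the foot $z_0$ of the perpendicular from $x_0$, the sliding ball can stay far from $x_0$, and $x_0$ may not even lie in the comparison annulus (in which case the inequality is vacuous). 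The sliding method does not ``bypass'' the absence of a uniform exterior ball condition; it only guarantees \emph{some} tangency, not tangency near $z_0$, and no choice of sliding direction or initial center fixes this. Also, \cite{CS} does not obtain the Lipschitz bound by sliding a fixed exterior ball; it rescales to the interior ball and uses Harnack together with a radial subsolution, which is precisely what the paper does. (A smaller inaccuracy: $\beta^+_{R,0}$ extended by $0$ in $B_R$ has a convex kink along $\partial B_R$ and so is not a viscosity supersolution across that sphere; the sliding argument only needs strict supersolvability in the open annulus, so this does not by itself derail the comparison, but the global-supersolution claim is not correct as stated.) Your second step, deducing the gradient bound from linear growth via rescaled interior $C^{1,\alpha}$ estimates, is fine and agrees with how the paper concludes.
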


\begin{proof}
Let $x_0 \in \Omega,$ and $d=\min\{\text{dist}(x_0,\J_0),
\text{dist}(x_0,\J_1)\}$. Assume without loss of generality that
$d=\text{dist}(x_0,\J_0).$ We wish to prove that
\begin{equation*}\label{usmall}|\nabla u(x_0)| \leq K.\end{equation*}

Let
$$v(x)=\frac{1}{d}u(x_0 +dx)$$
be the rescale of $u$ in $B_d(x_0)$. Then $v \geq 0$ solves a
uniformly elliptic equation
\begin{equation}\label{G}G(D^2v)=0 \ \ \text{in} \ \ B_1(0),\end{equation} with $G(M)= d F(M/d)$ having the same ellipticity
constants as $F,$ and $G(0)=0.$ Hence, by Harnack's inequality
(see \cite{CC})  \begin{equation}
\label{+0} v \geq c v(0) \ \ \text{in} \ \ 
B_{1/2}(0).\end{equation}

Let us choose $\beta<0$ such that, the radially symmetric function
$$g(x)=\frac{c v(0)}{2^{-\beta} -1}(|x|^\beta -1)$$satisfies $G(D^2g) \geq 0$ in the annulus $B_1 \setminus
\overline{B_{1/2}}$, $g = 0$ on $\partial B_1$ and $g = c v(0)\le 
v$ on
$\partial B_{1/2},$ due to~\eqref{+0}.

Then, by the maximum 
principle
$$v \geq g \ \
\text{in} \ \ B_1 \setminus \overline{B_{1/2}}.$$ Now, let $x_1
\in
\partial B_1(0)$ be such that $v(x_1)=0$. Then, since $\nabla
v(x)= \nabla u(x_0+dx)$, and $u$ solves \eqref{2}, we have
$|\nabla v(x_1)| = 1.$ Let $\nu$ be the inward normal to $\partial
B_1$ at $x_1$. 

Then, 
$$1 = |\nabla v(x_1)| \geq v_{\nu}(x_1) \geq g_{\nu} (x_1)
\geq C 
v(0).$$ Using Harnack's inequality we conclude that
\begin{equation}\label{+A}
\|v\|_{L^\infty(B_{1/2})} \leq C.
\end{equation} Also,
by the $C^{1,\alpha}$ estimate for equation \eqref{G} (see
\cite{CC}), we have that \begin{equation}\label{+B}
\|v\|_{C^{1,\alpha}(B_{1/4})} \leq C \|v\|_{L^\infty(B_{1/2})}.
\end{equation} Therefore, from~\eqref{+A} and~\eqref{+B},
we obtain that $|\nabla v(0)|=|\nabla u (x_0)|$ is bounded by a
universal constant.
\end{proof}

Now we fix a $\sigma \in (0,1)$ and we proceed to analyze the
properties of the correspondent level set of $u$, that is~$\{
u=\sigma\}$.

We start
with a
Definition and an elementary Lemma and then we state and prove the
desired Harnack type inequality. In the next section, towards
proving our Theorem \ref{main}, we will show that $\{u=\sigma\}$
is flat enough, i.e. it is contained in a strip. Then, Harnack
inequality
will be used to prove an improvement of flatness for
$\{u=\sigma\}$ which implies the result of Theorem \ref{main}.


\begin{defn}We define $A_{R,\sigma}$ to be the set of all $x$'s such that $$B_{R}(x) \subseteq \{ u<\sigma\}\quad \text{and
\quad $\overline{B_{R}}(x+te_2) \subset \{u< \sigma\}$} \quad
\text{for all $t < 0$}.$$
\end{defn}

In other words, $A_{R,\sigma}$ consists of the centers of the
balls of radius $R$ whose translates in the direction $-e_2$ are
compactly included in $\{u<\sigma\}.$

\begin{lem}\label{easyHarnack}
Assume that $x_* \in A_{R,\sigma}$, $R$ large, then
\begin{itemize}\item[$(i)$] $ u(x)\le \bar\beta_{R,\sigma}(x-x_*);$
\item[$(ii)$] If $x_0 \in \p B_R(x_*) \cap \{u=\sigma\}$, then
$\J_1\cap B_{K}(x_0) \neq \emptyset$ for some universal constant
$K.$\end{itemize}
\end{lem}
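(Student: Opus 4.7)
The plan is to run the same sliding method used in the proof of Proposition~\ref{det c}, but with the comparison function $v:=\bar\beta_{R,\sigma}(\cdot-x_*)$ from Proposition~\ref{bar tilde}. The decisive role of the hypothesis $x_*\in A_{R,\sigma}$ is to guarantee, for every $t<0$, that the \emph{closed} ball $\overline{B_R(x_*+te_2)}$ is strictly contained in $\{u<\sigma\}$; this will dispose of the corner case of the sliding, where the usual $C^2$ maximum-principle machinery fails.

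For part~(i) I would consider the translates $v_t(x):=v(x-te_2)$, centered at $x_*+te_2$. For $t$ sufficiently negative the annular support of $v_t$ lies below $\J_0$, so $u\equiv 0$ on $B_{R_1}(x_*+te_2)$ and $v_t\equiv 1\ge u$ outside; hence $v_t\ge u$ on all of $\R^2$. Slide $t$ upward and let $\bar t$ be the smallest $t$ at which $v_{\bar t}$ touches $u$ from above at some point $\bar x$; the goal is to show $\bar t\ge 0$. Assume by contradiction $\bar t<0$ and set $\rho:=|\bar x-x_*-\bar t e_2|$. If $\rho\in(R_0,R_1)\setminus\{R\}$, then $v_{\bar t}$ is a strict $C^2$ supersolution by Proposition~\ref{bar tilde}(ii), and the fully nonlinear strong maximum principle (applied to $v_{\bar t}-u\ge 0$ through the Pucci minimal operator) forces $v_{\bar t}\equiv u$ near $\bar x$, incompatible with $F(D^2 v_{\bar t})<0=F(D^2 u)$. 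If $\rho=R_1$, then $v_{\bar t}(\bar x)=u(\bar x)=1$ forces $\bar x\in\J_1$, and the Hopf inequality $\partial_\nu v_{\bar t}\le \partial_\nu u=1$ along the common outward normal clashes with $|\nabla v_{\bar t}(\bar x)|\ge 1+\bar C_4/R$ from Proposition~\ref{bar tilde}(iii), exactly as in Proposition~\ref{det c}. If $\rho=R_0$, then $v_{\bar t}\equiv 0$ on $B_{R_0}(x_*+\bar t e_2)$ and $v_{\bar t}\ge u\ge 0$ forces $u\equiv 0$ there, so this ball is tangent to $\J_0$ at $\bar x$ from below; the symmetric Hopf comparison then contradicts $|\nabla v_{\bar t}(\bar x)|\le 1-\bar C_3/R$. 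Finally, if $\rho=R$, then $v_{\bar t}(\bar x)=u(\bar x)=\sigma$; but $\bar x\in\overline{B_R(x_*+\bar t e_2)}$ and, since $\bar t<0$, the definition of $A_{R,\sigma}$ gives $\overline{B_R(x_*+\bar t e_2)}\subset\{u<\sigma\}$ strictly, a direct contradiction. All four cases are impossible, so $\bar t\ge 0$ and hence $v_0\ge u$, which is exactly~(i).

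For part~(ii), with $\bar t=0$ the point $x_0$ is a legitimate contact of the $\rho=R$ type (the strict closed-ball inclusion fails only at $t=0$). Examining the one-sided radial derivatives of $v_0-u\ge 0$ at $x_0$ in the direction $\hat r:=(x_0-x_*)/R$, from inside $B_R(x_*)$ and from the annulus $B_{R_1}(x_*)\setminus B_R(x_*)$, yields the pinching $1-\bar C_1/R\le \nabla u(x_0)\cdot\hat r\le 1+\bar C_1/R$. Combining this with the universal Lipschitz bound of Lemma~\ref{Lip} and interior $C^{1,\alpha}$ estimates for $F(D^2 u)=0$, the gradient of $u$ remains essentially of unit length in a universal neighborhood of $x_0$, forcing $u$ to attain the value $1$ at a point within a universal distance $K$ of $x_0$. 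Since $u<1$ in $\Omega$ and $u\equiv 1$ above $\J_1$, such a point sits on or above $\J_1$, whence $\J_1\cap B_K(x_0)\neq\emptyset$.

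The main obstacle is the corner of $\bar\beta$ on $\partial B_R$: since $\bar\beta$ is only $C^0$ there, the $C^2$-based maximum-principle arguments that dispatch the other three radii are unavailable, and this case must instead be closed by the purely geometric content of $A_{R,\sigma}$, namely the strict inclusion $\overline{B_R(x_*+te_2)}\subset\{u<\sigma\}$ for $t<0$. In part~(ii) the same corner becomes the source of the useful information, supplying the quantitative gradient estimate that propagates $u$ from $\sigma$ up to $1$ over a controlled distance.
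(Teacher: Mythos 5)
Your argument for part~(i) is essentially the paper's, with the case analysis spelled out in more detail. The decomposition into the four possible radii of the contact point is sound, and you correctly identify that the crucial ``corner'' case $\rho=R$ is ruled out for $\bar t<0$ by the strict closed-ball inclusion built into the definition of $A_{R,\sigma}$; the other three cases are dispatched by the comparison principle and the gradient bounds in Proposition~\ref{bar tilde}$(iii)$ via Hopf, exactly as in the proof of Proposition~\ref{det c}.

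Part~(ii) has a genuine gap. The pinching $1-\bar C_1/R \le \nabla u(x_0)\cdot\hat r \le 1+\bar C_1/R$ is correct, but the step ``the gradient of $u$ remains essentially of unit length in a universal neighborhood of $x_0$, forcing $u$ to attain the value $1$ at a point within a universal distance'' does not follow from what you have. The interior $C^{1,\alpha}$ estimate, combined with the ball $B_c(x_0)\subset\{0<u<1\}$ supplied by Lemma~\ref{Lip} and the fact that $u(x_0)=\sigma$ lies in the compact interval $I$, does control $\nabla u$ near $x_0$ --- but only in a ball of small universal radius $r_0$, within which $u$ rises by at most a small universal amount, not up to $1$. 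To continue you would have to re-derive the gradient lower bound at $x_0+r_0\hat r$, but the only source of that lower bound was the barrier touching at $x_0$, which does not propagate; moreover as you approach $\J_1$ the distance to the boundary shrinks and the $C^{1,\alpha}$ estimate degenerates, so the argument cannot be iterated up to $u=1$. The paper's proof of~(ii) avoids this entirely: it runs a second sliding argument with the small barrier $\bar\beta_{K/2,1}$ obtained from Proposition~\ref{bar tilde} with $\sigma=1$ and $\rho_1=0$, slid in the radial direction $\nu=(x_0-x_*)/R$. Since $u=0$ on $B_{K/2}(x_*)$ by part~(i), the barrier starts above $u$; the strict supersolution property and the strict inner gradient bound force the contact point onto $\J_1$; and the constraints $R-K/2\le\bar t\le R$ pin that contact point into $B_K(x_0)$. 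If you want a proof along the lines you sketched, you would need some nondegeneracy result for $u$ near $\{u=\sigma\}$, which is not among the tools established at this point in the paper.
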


\begin{proof}Claim $(i)$ follows easily by applying the sliding method with the
comparison function $\bar\beta_{R,\sigma}(x-x_*)$,
where the notation of
Proposition \ref{bar tilde} is used.

The argument
is similar to the proof of Proposition \ref{det c}. Touching along
$\J_0,\J_1$ is excluded by Proposition \ref{bar tilde}$(iii)$ and
contact points always occur on $\{u=\sigma\}$.

In order to prove $(ii)$, we observe that in Proposition \ref{bar
tilde} we can take $\sigma=1$ and $\rho_1=0$. Then, for $R$ large
we have that $\bar{\beta}_{R,1}$ is a strict supersolution in
$B_{R}\setminus B_{R_0}$ and it satisfies the strict free boundary
condition $|\nabla \bar \beta_{R,1} |<1$ on $\partial B_{R_0}.$

Now we fix $K$ large and apply the sliding method in the direction
$\nu$ of $x_0 - x_* $ with comparison function $v(x)=\bar
\beta_{K/2,1}(x-x_*)$. This means that the translates $v_t(x)=v(x
-t\nu)$ are in the $\nu$ direction.

According to $(i)$, $u=0$ in $B_{K/2}(x_*)$ ($R > K$), thus $v$ is
above $u$ in such ball. Let $\bar t$ be the smallest $t \geq 0$
such that $v_t$ touches $u$ from above. Since $v_{\bar t}$ is a
strict supersolution and it satisfies the strict free boundary
condition $|\nabla v_{\bar t}| < 1$ on its zero level set we
conclude that touching points can only occur on
\begin{equation}\label{empty}\p B_{K/2}(x_*+\bar t \nu) \cap \J_1\neq
\emptyset.\end{equation} In particular this implies,
$$\bar t \geq R- K/2.
$$ On the other hand, $\bar t \leq R$ since $0=v_R(x_0) < u(x_0)=\sigma.$
The bounds on $\bar t$ imply that $x_0 \in B_{K/2}(x_*+\bar t
\nu)$ which together with and \eqref{empty} yield the desired
result.
\end{proof}

\begin{lem}\label{Harnack}(Harnack Inequality) Assume that $$\partial_{x_2} u \geq 0 \quad
\text{in $\Omega.$}$$ If $x_*= -R\nu \in A_{R,\sigma}$ and
$$0 \in \{u=\sigma\} \cap \p B_R(x_*), \quad
\nu_2>0,$$ then, for any $M>0$ there exist $\bar R, \bar C$
depending on $\sigma, \nu_2,M, \lambda, \Lambda$ such that if $R
\geq \bar R$ then\begin{equation}\label{0.A}
B_{M-\frac{\bar C}{R}}(M\nu) \subset \{u > \sigma\}.
\end{equation}
\end{lem}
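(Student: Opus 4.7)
The plan is to run a sliding-method argument, parallel to those of Proposition~\ref{det c} and Lemma~\ref{easyHarnack}, but now with the subsolution obtained from Proposition~\ref{bar2}. Let $\underline{\beta}_{R',\sigma}$ denote the extension of $\beta^-_{R',\sigma}$ below the $\sigma$-level mentioned at the end of Section~2: it is radially decreasing, equals $1$ on a small inner ball, equals $\sigma$ on $\partial B_{R'}$, equals $0$ on an outer sphere; it satisfies $F(D^2 \underline{\beta}_{R',\sigma}) > 0$ in the open annulus (away from $\partial B_{R'}$); and it enjoys $|\nabla \underline{\beta}_{R',\sigma}| \leq 1 - c/R'$ on both the inner $\{\underline{\beta}=1\}$ and the outer $\{\underline{\beta}=0\}$ spheres. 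I would choose $R' = M + \bar C'/R$ and place the barrier at $y_0 = M\nu$, so that $B_{M - \bar C/R}(M\nu)$ is strictly contained in $\{\underline{\beta}_{R',\sigma}(\cdot - y_0) > \sigma\}$ (using the $O(1/R)$ estimates of the type \eqref{gbound}--\eqref{2.1a}) and so that $\partial B_{R'}(y_0)$ is externally tangent to $\partial B_R(x_*)$ at the anchor point $0$, up to $O(1/R)$ terms.

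Next I would slide the family $w_t(x) := \underline{\beta}_{R',\sigma}(x - y_0 - t e_2)$ in the $-e_2$ direction. By Lemma~\ref{easyHarnack}(ii) there is a point of $\J_1$ within universal distance $K$ of $0$, and since $\phi_1$ is $C^2$, for $t = t_0(M, \nu_2)$ large enough the support of $w_{t_0}$ lies in the region $\{u = 1\}$ above $\J_1$, giving $w_{t_0} \leq u$. Decrease $t$ and let $\bar t$ be the smallest value for which $w_{\bar t}$ touches $u$ from below at some point $\bar x$. Exactly as in the proofs already carried out, interior contact in the strict-subsolution annulus is excluded by the strong maximum principle; Hopf's lemma applied to $u - w_{\bar t} \geq 0$, together with the strict gradient gap between $|\nabla \underline{\beta}_{R',\sigma}|$ on the inner or outer sphere and $|\nabla u| = 1$ on $\J_1$ or $\J_0$, rules out contact on either the inner $\{\underline{\beta}=1\}$ sphere or the outer $\{\underline{\beta}=0\}$ sphere. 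Hence $\bar x$ must lie on the $\sigma$-sphere of $w_{\bar t}$ and simultaneously on $\{u = \sigma\}$.

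It remains to force $\bar t \leq 0$, from which $\underline{\beta}_{R',\sigma}(\cdot - y_0) \leq u$ and thus $u > \sigma$ on $B_{M - \bar C/R}(M\nu)$ will follow. Here I would combine two ingredients. First, the upper bound $u \leq \bar{\beta}_{R,\sigma}(\cdot - x_*)$ from Lemma~\ref{easyHarnack}(i) confines $\{u \geq \sigma\}$ to the complement of $B_R(x_*)$, so at a would-be first contact with $\bar t > 0$ the ball $B_{R'}(y_0 + \bar t e_2) \subset \{u \geq \sigma\}$ must be disjoint from $B_R(x_*)$. Second, the monotonicity $\partial_{x_2} u \geq 0$ makes $\{u \geq \sigma\}$ upward-closed in $e_2$, so the contact at $\bar x$ propagates downward in $e_2$ and allows one to push $w_t$ slightly past $\bar t$ while keeping the subsolution's $\sigma$-ball inside $\{u \geq \sigma\}$; this contradicts the minimality of $\bar t$ unless $\bar t = 0$, the unique configuration in which the sliding subsolution's $\sigma$-sphere is externally tangent to $\partial B_R(x_*)$ at the anchor point $0$.

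The main obstacle is this last step, that is, showing $\bar t \leq 0$ by threading together the two-ball tangency geometry at $0$, the Hopf-type exclusions at the inner and outer free-boundary spheres, and the monotonicity of $u$ in $e_2$. In particular, one needs sharp control on the $O(1/R)$ discrepancy between the sphere $\partial B_{R'}$ and the true $\sigma$-level of $\underline{\beta}_{R',\sigma}$; this discrepancy is what forces the loss $\bar C/R$ in the radius of the inner ball, with $\bar C = \bar C(\sigma, \nu_2, M, \lambda, \Lambda)$ entering through the geometry of the sliding family (which must stretch from the region above $\J_1$ down to a neighborhood of $M\nu$).
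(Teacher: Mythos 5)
Your proposal takes a genuinely different route from the paper and, as written, has a gap that I do not think can be repaired within this framework.

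The paper's proof of Lemma~\ref{Harnack} does not use the sliding method at all. It compares $u$ with the truncated linear function $w(x) = T_0^1(x\cdot\nu + \sigma + C_1/R)$ from \eqref{0.4}. Lemma~\ref{easyHarnack}$(i)$, together with the geometric inequality \eqref{4.444} comparing the tangent line to the sphere $\partial B_R(x_*)$, gives $w \geq u$ near $0$ with the pointwise gap $w(0)-u(0) = C_1/R$. The heart of the argument is then the \emph{interior Harnack inequality for the PDE} (\cite{CC}): since $w$ and $u$ both solve $F(D^2\cdot)=0$ in a fixed-size ball around $0$ (courtesy of the Lipschitz bound in Lemma~\ref{Lip}), the one-point estimate $w(0)-u(0)\le C_1/R$ propagates to $w-u \le C'/R$ in $B_{c/2}(0)$; iterating along $\{w=\sigma\}$ gives $0\le w-u\le C/R$ in a full neighborhood of size $\sim M/\nu_2$, after which the vertical monotonicity of $u$ finishes the proof. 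This is the De Silva--Savin mechanism for turning a barrier-induced one-sided estimate into a two-sided one, and it is indispensable here.

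Your plan replaces this with a barrier-only sliding argument, and the crux --- as you yourself flag --- is showing $\bar t \le 0$. The argument you give for this step does not work. You claim that the monotonicity $\partial_{x_2} u \ge 0$ lets you push $w_t$ slightly past $\bar t$ in the decreasing direction. But monotonicity cuts the other way: writing $w_t(x)=w_{\bar t}(x+(\bar t-t)e_2)$ and using $w_{\bar t}\le u$ gives, for $t<\bar t$, only $w_t(x)\le u(x+(\bar t-t)e_2)$, and since $\bar t - t>0$ and $u$ is nondecreasing in $e_2$, the right-hand side is $\ge u(x)$, not $\le u(x)$. So the inequality one would need, $w_t\le u$, is precisely the one that monotonicity does \emph{not} provide when sliding down. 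Monotonicity supports sliding a subsolution upward, not downward; it makes $\bar t$ a genuine obstruction.

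More fundamentally, no purely barrier/sliding argument can reach $\bar t\le 0$ here, because a priori there is nothing preventing $\{u=\sigma\}$ from bulging outward (relative to $\partial B_R(x_*)$) by an amount of order $1$ somewhere in the region you slide through; the sliding ball would then stop at some $\bar t = O(1) > 0$. Ruling out such a bulge near the anchor point $0$ is exactly the content of the Harnack inequality you are trying to prove, so the barrier argument is circular. Lemma~\ref{easyHarnack}$(i)$ confines $\{u\ge\sigma\}$ to the complement of $B_R(x_*)$, but that is a one-sided bound; the missing ingredient is pointwise closeness of $u$ to the linear profile, and that information only enters via the interior Harnack inequality for the PDE, which your sketch never invokes. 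The barrier/convexity arguments are the right tool later (Propositions~\ref{convexhull} and~\ref{lsigma}, and the improvement-of-flatness step in the proof of Theorem~\ref{main}), but they all take the Harnack-type inequality as an input rather than producing it.
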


\begin{proof}
Let\begin{equation}\label{0.4}
w(x)= T_0^1(x\cdot \nu + \sigma +
\frac{C_1}{R}),\end{equation}
with $C_1$ to be
chosen later (see \eqref{t01} for the definition of $T_0^1$).

Notice that~$F(D^2 w)=0$
and~$|\nabla w|=1$ in~$\{0<w<1\}$.

First, we wish to show that if $R$ is large enough
\begin{equation}\label{wbig}w(x) \geq \bar \beta_{R,\sigma}(x-x_*)
\quad  \text{for any $x\in
B_{\frac{20M}{\nu_2}}(0)$}.\end{equation} Since
$T_0^1$ is non-decreasing, in view of \eqref{bbound} it suffices
to show that
$$x \cdot \nu +\sigma + C_1/R \geq |x-x_*|-R + \sigma + \bar C_5/R
\quad \text{in $B_{\frac{20M}{\nu_2}}(0)$.}$$ Thus we need to
prove that
\begin{equation}\label{4.444}
x \cdot \nu +  R+  C_2/R \geq |x-x_*|
\quad \text{for any $x\in
B_{\frac{20M}{\nu_2}}(0)$},\end{equation}
for some $C_2$ large.

For this we write
$$x=:a\nu + b \nu^\perp \in
B_{\frac{20M}{\nu_2}}(0), \quad |a|,|b| \leq 20M/\nu_2.$$
Since $x_*=-R\nu$,
\eqref{4.444} may be written as
$$a+ R + C_2/R \geq |(a+R)\nu + b \nu^\perp|$$ or, equivalently,
$$(a+R+ C_2/R)^2 \geq (a+R)^2+b^2.$$ Hence it is enough to prove
that $$(1+2a/R)C_2 \geq b^2.$$ Since $|a|,|b| \leq 20M/\nu_2$ this
inequality holds for $R$ and $C_2$ large depending on $M/\nu_2$.
Thus \eqref{wbig} is proved.

As a consequence, in view of Lemma \ref{easyHarnack}$(i)$,
we obtain that \begin{equation}\label{wu} w \geq u \quad  \text{in
$ B_{\frac{20M}{\nu_2}}(0)$}.
\end{equation}

Now, according to Lemma \ref{Lip} $u$ is uniformly Lipschitz with
universal Lipschitz constant $K$. Thus, if $$u(x) \in
I:=[\sigma/2, 1/2+\sigma/2]$$ then there exists a constant $c$
depending on $K,\sigma$ such that $$B_c(x) \subset \{0<u<1\}.$$

Clearly, the same statement holds also for $w$, that is $$B_c(x)
\subset \{0<w<1\}$$ and so~$F(D^2 w)=0$ in $B_c(x)$, as long
as~$w(x)\in I$.

In particular, we observe that~$u(0), w(0) \in I$, so $$
F(D^2w)=F(D^2u)=0 \quad \text{in $B_{c}(0).$}$$
Also, by \eqref{0.4},
$$w(0) - u(0)=C_1/R,$$
So, in view of \eqref{wu} we can apply the Harnack inequality in
$B_c(0)$ and obtain
$$w - u \leq C'/R \quad \text{in $B_{c/2}(0)$}.$$
Now, we
observe that~$\{w=\sigma\}$ is the line~$\{x\cdot\nu
=-C_1/R\}$, due to~\eqref{0.4}, and therefore we
pick $x_1 \in B_{c/2}(0) \cap \{w=\sigma\}$
(of course, this is possible since~$R$ is large enough).

Then,
\begin{equation}\label{x_1} 0 \leq w(x_1) -u(x_1) \leq
C'/R\end{equation} and  $u(x_1),w(x_1) \in I$. Arguing as above,
we conclude that
$$w - u \leq  C''/R \quad \text{in $B_{c/2}(x_1)$}.$$
Now we can pick $x_2 \in B_{c/2}(x_1) \cap \{w=\sigma\}$ and
iterate this argument a finite number of times to conclude that in
a $c/4$ neighborhood $N$ of $\{w=\sigma\} \cap B_{10M/\nu_2}(0)$
we have that \begin{equation}\label{Hwu} 0 \leq w - u \leq
C/R,\end{equation} with $C$ depending also on $\sigma, M/\nu_2.$

If $$x \in L:=\{x \cdot \nu = C/R\}\cap B_{10M/\nu_2}(0)$$ then
\begin{equation}w(x)= \sigma + \frac{C+C_1}{R}
.\end{equation}

Also, $x \in N$ if~$R$ is large,
and so from \eqref{Hwu} we get $$u(x)
> \sigma, \quad{\mbox{for any $
x\in L$.}}$$
Therefore,
since $u$ is monotone in the vertical direction, \eqref{0.A}
will follow if we show that
\begin{equation}\label{0.B}{\mbox{
$B_{M-\frac{\bar C}{R}}(M\nu)$ is above $L$ in the vertical
direction.}}\end{equation} For this, we choose $\bar C:= C$ and we
observe that~\eqref{0.B} is proved if we show that
\begin{equation}\label{x}
\mbox{$\forall \ x$ s.t. $|x-M\nu| \le M- C/R, \quad \exists \
\tilde{x} \in L$ s.t. $x-\tilde{x} = t e_2, \ t \geq
0.$}\end{equation}
Hence, in order to
prove~\eqref{x} (and so to complete the proof of the Lemma), given
$x$ as in~\eqref{x}, we let $$\tilde x := \left(x_1,
\frac{1}{\nu_2}\left(\frac{C}{R}-x_1 \nu_1\right)\right).$$
Clearly $\tilde x \cdot \nu = C/R$.

Moreover, since $|x|
< 2M$, then $|\tilde x_1|< 2M/\nu_2,$ and $|\tilde x_2| <
3M/\nu_2$ which implies that $\tilde x \in B_{10M/\nu_2}(0)$,
hence \begin{equation}\label{0900}\tilde x \in L.\end{equation}

Furthermore,
$$\left(x- \frac{C}{R}\nu\right) \cdot \nu \geq 0$$ which
immediately implies that $x_2 \geq \tilde{x}_2$. This
and~\eqref{0900} prove~\eqref{x}.
\end{proof}

\section{Convexity arguments and proof of Theorem \ref{main}}

The purpose of this section is to exhibit the proof of Theorem
\ref{main}. As mentioned in the previous section, our aim is
first to
show that $\{u=\sigma\}$ is flat
enough, that is, it lies inside a strip.

We start with two
Propositions
which will be the key tools to achieve our goal. Roughly, the
first Proposition gives the convexity of the set $\{u<\sigma\}$
while the second one gives the convexity of $\{u>\sigma\}$.
However, their statements are different since the first one
requires the existence of two balls contained in $\{u< \sigma\}$
while the second one requires the existence of two points on or
above $\J_1$.

\begin{prop}\label{convexhull}Assume $y_i \in A_{R,\sigma}, i
=1,2.$ Then the
convex envelope generated by the balls $B_{R- C/R}(y_i), i=1,2$ is
included in $\{u < \sigma\},$ for $R$ large and $C$ universal.
\end{prop}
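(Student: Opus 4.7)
The plan is to establish the global dominance $u \le \bar\beta_{R,\sigma}(\cdot - y_t)$ for every $t \in [0,1]$, where $y_t := (1-t)y_1 + ty_2$. Since the convex envelope of $B_{R-C/R}(y_1)$ and $B_{R-C/R}(y_2)$ equals $\bigcup_{t\in[0,1]}B_{R-C/R}(y_t)$, once this dominance is in hand, restricting to each shrunk ball and using the expression for $\bar g$ in Proposition \ref{bar tilde} yields
\[
u(x) \le \bar g(-C/R) = \sigma - \frac{C}{R}\Bigl(1-\frac{\bar C_1}{R}\Bigr) - \frac{\bar C_2 C^2}{R^3} < \sigma
\]
for $C$ a universal constant chosen large enough, which is the desired conclusion.

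For $t \in \{0,1\}$ the dominance is furnished by Lemma \ref{easyHarnack}(i), which applies because $y_i \in A_{R,\sigma}$. To propagate it to intermediate $t$, I would introduce $S := \{t \in [0,1] : u \le \bar\beta_{R,\sigma}(\cdot - y_t) \text{ on } \R^2\}$, observe that $S$ is closed by continuity and contains $\{0,1\}$, and aim to prove it is also open, so that $S = [0,1]$ by connectedness. Openness is the crux, and it is where the standard sliding method enters: starting from $t \in S$, slide $\bar\beta_{R,\sigma}(\cdot - y_s)$ in $s$ and let $\bar s > t$ be the first time a contact with $u$ occurs. By the strict supersolution inequality $F(D^2 \bar\beta_{R,\sigma}) < 0$ on $(B_{R_1}\setminus B_{R_0})\setminus\partial B_R$, any contact point must lie on one of the singular spheres $\partial B_{R_0}(y_{\bar s})$, $\partial B_R(y_{\bar s})$, or $\partial B_{R_1}(y_{\bar s})$. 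Contact on $\partial B_{R_0}$ or $\partial B_{R_1}$ is excluded exactly as in the proofs of Proposition \ref{det c} and Lemma \ref{easyHarnack}(i), by coupling Hopf with the gradient bounds $|\nabla\bar\beta|\le 1-\bar C_3/R$ on $\partial B_{R_0}$ and $|\nabla\bar\beta|\ge 1+\bar C_4/R$ on $\partial B_{R_1}$ from Proposition \ref{bar tilde}(iii) and the free-boundary condition $|\nabla u| = 1$ on $\J_0 \cup \J_1$.

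The main obstacle is the remaining case: contact at $\bar x \in \partial B_R(y_{\bar s})$ with $u(\bar x) = \sigma$, where no direct gradient mismatch is available. To close the argument I would exploit the vertical monotonicity built into the hypothesis $y_1, y_2 \in A_{R,\sigma}$: for every $\tau \ge 0$, the translates $y_i - \tau e_2$ still lie in $A_{R,\sigma}$, so by a simultaneous two-parameter sliding in the directions $y_2 - y_1$ and $-e_2$ one can route around any forbidden contact on $\partial B_R$. Concretely, for each candidate $t$ one first slides the barrier far down (deep below $\J_0$, where the dominance is trivial since the support of the barrier lies inside $\{u \equiv 0\}$), and then lifts it back up at the new horizontal position. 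The shrinkage by $C/R$ in the statement is precisely the quantitative room needed for this maneuver: the strict inequality $u \le \sigma - C/R + O(1/R^2)$ on the smaller ball, inherited from $\bar g$, survives small perturbations of $t$ and forces the openness of $S$.
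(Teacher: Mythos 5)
Your plan hinges on establishing the global dominance $u \le \bar\beta_{R,\sigma}(\cdot - y_t)$ for \emph{every} $t \in [0,1]$, but this aims at a strictly stronger statement than the proposition requires, and it is not supplied by the hypotheses (and is likely false in general). Such dominance would force $u \le \sigma$ on the full closed ball $\overline{B_R(y_t)}$, i.e.\ that the convex hull of the \emph{unshrunk} balls lies in $\{u\le\sigma\}$. However, the intermediate center $y_t$, $0<t<1$, is not assumed to lie in $A_{R,\sigma}$, so there is no reason for $B_R(y_t)$ to avoid $\{u>\sigma\}$: the level set $\{u=\sigma\}$, which may touch $\partial B_R(y_1)$ and $\partial B_R(y_2)$ from outside, is free to dip below the common tangent chord and cut into the bulge of $B_R(y_t)$ for intermediate $t$. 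The $C/R$ shrinkage in the statement exists precisely to accommodate this dip; it does \emph{not} furnish the room for the continuity argument on your set $S$, because knowing $u \le \sigma - C/R$ on the small ball $B_{R-C/R}(y_t)$ says nothing about $u$ on $\partial B_R(y_{t'})$ for $t'$ near $t$, which is exactly where the dominance by $\bar\beta_{R,\sigma}(\cdot - y_{t'})$ is tight. Since Lemma \ref{easyHarnack}$(i)$ only gives $u\le\bar\beta_{R,\sigma}(\cdot-y_i)$ with possible equality on $\partial B_R(y_i)$, the dominance is not strict even at the endpoints, and openness of $S$ does not follow.

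The contact on $\partial B_R(y_{\bar s})$ with $u=\sigma$ that you flag is not a technicality to be routed around but the central obstruction, and the proposed two-parameter slide does not dispose of it: after lowering deep into $\{u\equiv 0\}$, shifting the horizontal center, and lifting back up, the first contact can again occur on $\partial B_R$ against $\{u=\sigma\}$, where neither the strict supersolution property nor Hopf's lemma gives a contradiction (the barrier is only continuous, not $C^1$, across $\partial B_R$, and both functions equal $\sigma$ there). The paper avoids the problem entirely by using a \emph{linear} one-dimensional comparison function $v$ (with $F(D^2 v)=0$, $|\nabla v|=1$, and $\{v=\sigma\}$ equal to the common upper tangent line $l$ to the two shrunk balls) in the sliding argument. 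A linear $v$ has no interior sphere of tangency on which the comparison degenerates: contact is pushed to the $0$- and $1$-level sets, where the gradient mismatch against $|\nabla u|=1$ on $\J_0,\J_1$ applies, and the curved barriers of Proposition \ref{bar tilde} are used only on the two lateral sides of the auxiliary rectangle $D$ to seed the comparison. That replacement of the curved barrier by a flat one in the global step is the missing key idea, and without it your argument does not close.
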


\begin{proof}
Let $l$ be the common tangent line from above
to $B_{R - C/R}(y_i),
i=1,2$
and denote by $x_i, i=1,2$ the respective points of tangency.



Let $v$ be the one-dimensional linear solution to
$F(D^2v)=0$ in~$\{0<v<1\}$
with~$|\nabla v|=1$
which equals
$\sigma$ on the line $l$, and denote by $l_0$ and $l_1$ the 0 and
the 1 level set of $v$ respectively.

Call $z_i, \xi_i$ the
intersection points of $l_i$ and the lines passing through
$y_1,x_1$ and $y_2,x_2$ respectively ($i=0,1$). Also, let $D$ be
the open rectangle with vertices at $z_i,\xi_i$, $i=0,1$.

The segment joining any two points $\eta$ and $\zeta$
which contains $\zeta$ but not $\eta$ will be denoted by~$(\eta,
\zeta]$. If it contains both~$\eta$ and~$\zeta$, we write~$[
\eta,\zeta]$.

In this notation, we claim that
\begin{equation}\label{SC}{\mbox{
$v > u$ on $S:=(z_0,z_1] \cup (\xi_0,\xi_1]$, and
$z_0$ and $\xi_0$ are strictly below $\J_0$.
}}\end{equation}

We assume for the moment that the claim in~\eqref{SC}
holds, and we apply
the sliding method.

Since $u=0$
below $\J_0$ there exists a $t_0 < 0$ such that $v_{t}$ is above
$u$ for all $t \leq t_0$. Let $\bar t$ be the smallest $t \geq
t_0$ such that $v_t$ touches $u$ from above. Assume by
contradiction that $\bar t \leq 0$.

Then, since $y_i+ te_2 \in A_{R,\sigma}$ for all $t <0$, clearly
the
claim in~\eqref{SC} holds also for $v_{\bar t}$, that is
\begin{eqnarray}\label{sides}& v_{\bar t} > u \quad \mbox{on $S+\bar t
e_2$ and}\\
\nonumber & z_{\bar t}:=z_0+\bar t e_2, \xi_{\bar t}:=\xi_0+\bar t
e_2 \quad \mbox{are strictly below $\J_0$}.
\end{eqnarray}
Thus, no contact points can occur in $D_{\bar t}:= D+\bar t e_2$,
otherwise $u$ and $v_{\bar t}$ coincide and \eqref{sides} is
contradicted.

Also, in view of Proposition~\ref{det c}
and Hopf Lemma, contact points
cannot
occur on the 0 and 1 level set of $v_{\bar t}$ except at the
vertices of $D_{\bar t}$. Hence, using again \eqref{sides} we
conclude that
$$v_{\bar t}> u \quad \mbox{in $\overline{D}_{\bar t} \setminus
[z_{\bar t}, \xi_{\bar t}]$ and $[z_{\bar t}, \xi_{\bar t}]
\Subset \{u=0\}.$}$$ This implies that for a small $\eps
>0$ the translate $v_{\bar t + \eps}$ is above $u$ which
contradicts the definition of $\bar t$. Thus, $\bar t
>0$ and by the arguments above
$$v_{t}> u \quad \mbox{in $\overline{D}_{t} \setminus
[z_{t}, \xi_{t}]$ for all $t \leq 0.$}$$ In particular, since we
have chosen $v$ so that its $\sigma$ level set coincides with $l$,
we obtain that $u<\sigma$ below $[x_1,x_2]$. This gives the result
stated in the Proposition. \

We are left with the proof of the claim
in~\eqref{SC}.

For this, let us show that
\begin{equation}\label{SC2}
v >u \quad \mbox{on $(z_0,z_1]$} \quad
\text{and $z_0$ is strictly below $\J_0$.}\end{equation}

Call $\nu= (x_1-y_1)/|x_1-y_1|$ and observe that
\begin{equation}\label{11.11}
z_0=y_1+\Big(R-\frac{C}{R}-\sigma\Big)\nu,
\quad
z_1=y_1+\Big(R-\frac{C}{R}+1-\sigma\Big)\nu.\end{equation}
Hence,
making use of~\eqref{2.1a} and taking~$C$ large enough,
we have that
\begin{equation}\label{11}
[z_0,z_1] \subset B_{R_1}(y_1)\end{equation}
and
$$ z_0 \in
B_{R_0}.$$
Also, we know from Lemma \ref{easyHarnack}$(i)$ that
\begin{equation}\label{12}
\bar \beta_{R,\sigma}(x-y_1) \geq u (x)
\quad \text{in $B_{R_1}(y_1)$}.\end{equation}
Thus, \eqref{11} and \eqref{12} say that~\eqref{SC2}
will follow if we show that
\begin{equation}\label{SC3}
v(x) > \bar \beta_{R,\sigma}(x-y_1)
\quad \text{on $(B_{R_1}(y_1)\setminus B_{R_0}(y_1)) \cap
(z_0,z_1].$}\end{equation}
For this, we observe that,
by~\eqref{11.11},
$$v(x)= \sigma
+ s+\frac{C}{R} \quad \text{if $x=y_1+(s+R)\nu$}.$$ So, by
applying \eqref{11.11} and~\eqref{gbound}, if~$C$ suitably large,
we see that on $(z_0,z_1]$
\begin{eqnarray*}
&& v(x)-\bar \beta_{R,\sigma}(x-y_1)
=\sigma+s+\frac CR-\bar g(|x-y_1|-R)\\
&&\qquad=\sigma+s+\frac CR-\bar g(s)>0.
\end{eqnarray*}
This proves~\eqref{SC3} and so~\eqref{SC2}.

By replacing $z_i$ with $\xi_i$ in the proof of~\eqref{SC2},
one completes the proof of
the claim in~\eqref{SC}.
\end{proof}

The next Proposition also follows with similar arguments.

\begin{prop}\label{lsigma}Let $y_1,y_2$ be such that $u(y_1)=u(y_2)=1$ with $y_1\cdot e_1 > y_2 \cdot e_1$, and let
$\nu$ be the direction perpendicular to $y_1-y_2$ with $\nu_2>0.$
If \begin{equation}\label{J}
l_\sigma(y_1,y_2):= \{x \ | \
(x-y_1)\cdot \nu =\sigma\} \cap
\{x \ | \ x\cdot e_1 \in (y_1\cdot e_1, y_2 \cdot e_1)\},
\end{equation}
then
$$l_\sigma \subset \{u >\sigma\}.$$
\end{prop}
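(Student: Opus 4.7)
The plan is to mirror the argument of Proposition \ref{convexhull}, swapping the roles of $\{u<\sigma\}$ and $\{u>\sigma\}$. Let $l$ denote the line through $y_1$ and $y_2$, and take as comparison function the one-dimensional linear function
\[v(x):=\min\bigl(1,\max(0,(x-y_1)\cdot\nu)\bigr).\]
It satisfies $F(D^2v)=0$ on $\{0<v<1\}$ with $|\nabla v|=1$ on both free-boundary lines $\{v=0\}=l$ and $\{v=1\}=l+\nu$; crucially $\{v=\sigma\}=l_\sigma$. Since $v(y_i)=0<1=u(y_i)$, the function $v$ lies strictly below $u$ at $y_1,y_2$.

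Following the template of Proposition \ref{convexhull}, I introduce the rectangle $D$ with bottom vertices $z_0:=y_1,\xi_0:=y_2$ on $l$ and top vertices $z_1:=y_1+\nu,\xi_1:=y_2+\nu$ on $l+\nu$, and first verify the strict side-edge inequality $v<u$ on $S:=(z_0,z_1]\cup(\xi_0,\xi_1]$ (the analogue of claim \eqref{SC}). With this boundary claim in hand, apply the sliding method to $v_t(x):=v(x-te_2)$: for $t$ sufficiently large positive, $v_t\leq 0\leq u$ on the shifted rectangle $D_t:=D+te_2$, and I decrease $t$ to the critical value $\bar t$ at which $v_{\bar t}$ first touches $u$ from below in $\overline{D_{\bar t}}\cap\overline\Omega$. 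If $\bar t>0$, the side-edge inequality forces any contact point $\bar x$ to lie in the interior of $D_{\bar t}$ or on its top/bottom edges, and one of three subcases occurs: (i) $\bar x$ lies in $\Omega$, so the strong maximum principle applied to $u-v_{\bar t}$ gives $u\equiv v_{\bar t}$; (ii) $\bar x\in\J_0$ with $v_{\bar t}(\bar x)=u(\bar x)=0$ and matching gradient moduli $|\nabla u|=|\nabla v_{\bar t}|=1$, whence Hopf's lemma forces $v_{\bar t}\equiv u$; or (iii) $\bar x\in\J_1$ with $v_{\bar t}(\bar x)=u(\bar x)=1$, giving $v_{\bar t}\equiv u$ by an analogous Hopf argument. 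In every admissible subcase $u$ coincides with the one-dimensional linear function $v_{\bar t}$, which is the conclusion of Theorem \ref{main} and in particular implies $u>\sigma$ on $l_\sigma$. Otherwise $\bar t\leq 0$, hence $v=v_0\leq u$ on $D$, yielding $u(p)\geq v(p)=\sigma$ for every $p\in l_\sigma$; a final strong-maximum-principle application at any would-be equality point inside $\Omega$ upgrades this to $u>\sigma$.

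The hard part will be establishing the strict side-edge inequality $v<u$ on $S$. Unlike in Proposition \ref{convexhull}, where claim \eqref{SC} was reduced to the explicit comparison $v>\bar\beta_{R,\sigma}$ via the upper bound $u\leq\bar\beta_{R,\sigma}$ supplied by Lemma \ref{easyHarnack}(i), there is no prepackaged lower bound on $u$ along the segments $(y_i,y_i+\nu]$. I would instead build a suitable subsolution barrier from the $\beta^-$-family of Proposition \ref{bar2} (namely, the extension $\underline{\beta}_{R,\sigma}$ alluded to at the end of Section 2), slide it in the $\nu$-direction near each $y_i$ to secure $\underline{\beta}_{R,\sigma}\leq u$ on a neighborhood of $S$, and then verify $v<\underline{\beta}_{R,\sigma}$ there by a geometric computation paralleling \eqref{SC3}. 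Once this boundary step is in place, the sliding dichotomy described above closes the proof.
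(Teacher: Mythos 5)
The basic shape of your argument—compare $u$ with the one-dimensional linear solution $v$ whose zero set is the chord through $y_1,y_2$, establish a side-edge inequality, then slide in $e_2$—is the same as the paper's. But you chose the wrong shape for the comparison region $D$, and this converts a trivial step into a hard (and, with your proposed tools, likely intractable) one.

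You take $D$ to be the \emph{rectangle} with sides $[y_i,\,y_i+\nu]$ pointing in the $\nu$-direction. The paper instead takes $D$ to be the \emph{parallelogram} with \emph{vertical} sides: the top vertices $z_i$ are the intersections of the vertical lines through $y_i$ with $\{v=1\}$, so $z_i=y_i+(1/\nu_2)\,e_2$. This choice is the whole point. Since $u(y_i)=1$, each $y_i$ lies on or above $\J_1$; since $\J_1$ is a graph over the $e_1$-axis, the vertical segment $[y_i,z_i]$ stays on or above $\J_1$, so $u\equiv 1$ there, while $v<1$ on $[y_i,z_i)$. The strict side-edge inequality $u>v$ on the lateral sides then costs nothing. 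Your segments $[y_i,y_i+\nu]$, by contrast, leave $y_i$ in the generally non-vertical direction $\nu$, and there is no reason they should remain above $\J_1$ (take $\J_1$ with slope steeper than $\nu$ near $y_i$). So $u$ need not equal $1$ on your side edges, and the inequality you call ``the hard part'' is genuinely in doubt. Your proposed fix—sliding a radially symmetric subsolution $\underline\beta_{R,\sigma}$ in the $\nu$-direction near each $y_i$ to lower-bound $u$ there—has a circularity problem: to start that sliding you would already need some quantitative lower bound on $u$ near $y_i$, which is exactly what you are trying to prove. The paper's vertical-side choice sidesteps the whole issue. (A secondary defect of the rectangle: its bottom and top edges are translates of $[y_1,y_2]$ by $r\nu$, so for $\nu_1\neq 0$ its horizontal extent shifts with height, and the set $l_\sigma$—which is defined by the fixed $e_1$-range $(y_2\cdot e_1,\,y_1\cdot e_1)$—need not lie in your $D$; the parallelogram with vertical sides has constant $e_1$-range and contains $l_\sigma$ automatically.)

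Once $D$ is taken to be the vertical-sided parallelogram, the remaining steps in your outline (the slide in $e_2$, the strong maximum principle plus Hopf at a putative contact point with $\bar t\geq 0$, the dichotomy ending in $\bar t<0$ and hence $u\geq v$ on $D$) match the paper's argument. The misstep is localized to the geometry of $D$.
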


\begin{proof}
Consider the linear
one-dimensional solution $v$ to $F(D^2v)=0$ and~$|\nabla v|=1$
in~$\{0<v<1\}$
which
is
equal to 0 on the line $l_0$ connecting $y_1$ and $y_2$ and
increases in the $e_2$ direction. Denote by $l_1$ the 1 level set
of $v$ and by $z_1,z_2$ the intersection points of the vertical
segments through $y_1,y_2$ with $l_1$. Also, let $D$ be the open
parallelogram with vertices at $y_i$, $z_i$, $i=1,2$.

It suffices to show that $$u>v \quad \mbox{in $D$}$$ since
$$l_{\sigma}(y_1,y_2) \subset D$$ and $$v = \sigma \quad \text{on
$l_\sigma(y_1,y_2).$}$$

First let us notice that since $u=1$ above $\J_1$ then
\begin{eqnarray*}\label{vertside}&u > v
\quad \mbox{on $[y_i,z_i),$}\\\nonumber & z_i \quad \mbox{is
strictly above $\J_1$.}\end{eqnarray*} Now let us apply the
sliding method (upside-down). Since $u=1$ above $\J_1$, there
exists $t_0>0$ such that $u$ is above $v_{t}$ for all $t \geq
t_0$. Let $\bar t$ be the largest $t \leq t_0$ such that $u$
touches $v_{ t}$ from above. Assume by contradiction that $\bar t
\geq 0$. Again,
\begin{eqnarray}\label{vertside2}&u > v_{\bar t} \quad \mbox{on
$[y_i+\bar te_2,z_i+\bar te_2),$}\\\nonumber & z_i+\bar te_2 \quad
\mbox{is strictly above $\J_1$.}\end{eqnarray}

In particular $u$ and $v_{\bar t}$ cannot coincide. Thus,
similarly to the proof of Proposition \ref{convexhull}, the
comparison principle, Hopf
Lemma and \eqref{vertside2} imply that
$$u> v_{\bar t} \quad \mbox{on $(\overline D + \bar t e_2) \setminus [z_1
+ \bar t e_2, z_2 + \bar t e_2]$},$$ $$[z_1 + \bar t e_2, z_2 +
\bar t e_2] \Subset \{u=1\}.$$ Thus for small $\eps>0$, $u$ is
above $v_{\bar t -\eps}$ which contradicts the definition of $\bar
t.$ Hence $\bar t<0$ and $u$ is above $v=v_0$ in $\overline D.$ In
particular, $u> v$ on $\overline D
 \setminus [z_1, z_2]$ and hence  $u>\sigma$ on the $\sigma$ level set
$l_\sigma(y_1,y_2)$ of $v$.
\end{proof}

\

Now we are ready to show that $\{u=\sigma\}$ is included in a
strip. We start by defining $E_{R,\sigma}$ to be the convex hull
generated by $A_{R,\sigma}$.

{F}rom Proposition \ref{convexhull}, we have
that
\begin{equation}\label{83}
E_{R,\sigma} \subset \{u < \sigma\}.
\end{equation}

As a consequence of Lemma \ref{easyHarnack}$(ii)$, we obtain
the following Lemma.

\begin{lem}\label{intersect1} There exists $C$ large universal
constant such that in any vertical strip of width $2C$, $\{a-C
\leq x_1 \leq a+ C\}$ there exists a point on $\p E_{C,\sigma}$
that is at distance less than $2C$ from $\J_1$.
 \end{lem}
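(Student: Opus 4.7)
Plan. The key idea is to locate a point in $\overline{A_{C,\sigma}}\cap\partial E_{C,\sigma}$ inside the given strip whose associated closed ball of radius $C$ touches $\{u=\sigma\}$; Lemma \ref{easyHarnack}(ii) then places the contact point within a universal distance $K$ from $\J_1$, so the center lies within $C+K\le 2C$ of $\J_1$ upon choosing $C\ge K$ (universal).

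First I would fix the central vertical line $\ell_a:=\{x_1=a\}$ and set
\[
y_0:=\sup\bigl\{y:(a,y)\in A_{C,\sigma}\bigr\}.
\]
This set is nonempty because $u\equiv 0$ below $\J_0$ places $(a,y)\in A_{C,\sigma}$ for $y$ sufficiently negative, and it is bounded above because any $B_C((a,y))\subset\{u<\sigma\}\subset\{u<1\}$ stays disjoint from $\J_1$. Hence $x_*:=(a,y_0)\in\overline{A_{C,\sigma}}\subset\overline{E_{C,\sigma}}$.

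Next I claim $\overline{B_C(x_*)}\cap\{u=\sigma\}\ne\emptyset$. If not, $\overline{B_C(x_*)}$ lies strictly inside the open set $\{u<\sigma\}$, and a continuity/compactness argument shows that the translate $(a,y_0+\eta)$ still satisfies both defining conditions of $A_{C,\sigma}$ for small $\eta>0$: the shifted open ball is in $\{u<\sigma\}$ by continuity, and each shifted closed downward translate is either an original downward translate (which lies in $\{u<\sigma\}$ by passing to the limit from the sequence defining $y_0$) or a small upward perturbation of $\overline{B_C(x_*)}$ (which remains in $\{u<\sigma\}$ by continuity). This would contradict the maximality of $y_0$. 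Picking $x_0\in\partial B_C(x_*)\cap\{u=\sigma\}$, Lemma \ref{easyHarnack}(ii) gives $\mathrm{dist}(x_0,\J_1)\le K$, and therefore $\mathrm{dist}(x_*,\J_1)\le C+K$.

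Finally, I must verify that $x_*$—or a suitable substitute inside the strip—lies on $\partial E_{C,\sigma}$. Since $A_{C,\sigma}$ is closed under translation by $-e_2$, so is $E_{C,\sigma}$, and the upper boundary of $\overline{E_{C,\sigma}}$ is the concave envelope $h$ of the profile $a\mapsto y_0(a)$, whose extreme points (vertices of $h$) all belong to $\overline{A_{C,\sigma}}$. If $x_*$ itself is such a vertex we conclude as above. Otherwise $(a,h(a))$ lies on an affine segment of $h$ joining two vertices $V_1,V_2\in\overline{A_{C,\sigma}}$; whenever one of $V_1,V_2$ has first coordinate in $[a-C,a+C]$, it is already a vertex of $\partial E_{C,\sigma}$ inside the strip and the ball-touching argument applies to it. In the remaining case when both $V_1,V_2$ lie outside the strip, I would invoke Proposition \ref{convexhull} applied to $V_1,V_2$ to obtain a tube of radius $\simeq C$ around the segment inside $\{u<\sigma\}$, then push this tube upward in an $-e_2$-stable manner and apply Lemma \ref{easyHarnack}(ii) to the resulting enlarged configuration, placing the topmost point of $\partial E_{C,\sigma}$ inside the strip within $2C$ of $\J_1$. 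Taking $C$ universal with $C>K$ completes the proof, the main obstacle being precisely this last reconciliation between the top of $\overline{A_{C,\sigma}}$ and the top of $\overline{E_{C,\sigma}}$ within a prescribed strip.
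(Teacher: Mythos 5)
Your first two paragraphs are on the right track and essentially mirror the paper's argument: take a ball $B_C(y)$ with center on $\{x_1=a\}$ and $y\in A_{C,\sigma}$, tangent to $\{u=\sigma\}$ at some $x_0$; apply Lemma \ref{easyHarnack}(ii) to get $\mathrm{dist}(x_0,\J_1)\le K$; hence $\mathrm{dist}(y,\J_1)\le C+K$.

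The gap is in the last paragraph, where you try to show that $x_*$ (or a nearby vertex) itself lies on $\partial E_{C,\sigma}$, and fall back on an unspecified ``tube-pushing'' construction when that fails. This effort is both unnecessary and, as written, not a proof. The simple fix the paper uses: you don't need $x_*$ to be a boundary point of $E_{C,\sigma}$. Instead consider the \emph{segment} $[x_0,x_*]$. Its endpoint $x_*$ lies in $A_{C,\sigma}\subset E_{C,\sigma}$, while $x_0\in\{u=\sigma\}$ cannot lie in $E_{C,\sigma}$ because, by \eqref{83}, $E_{C,\sigma}\subset\{u<\sigma\}$. Therefore $[x_0,x_*]$ must cross $\partial E_{C,\sigma}$ at some point $p$. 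Since $x_*\cdot e_1=a$ and $|x_0-x_*|=C$, every point of the segment has first coordinate in $[a-C,a+C]$, so $p$ is in the given strip; and $|p-x_0|\le C$ gives $\mathrm{dist}(p,\J_1)\le C+K<2C$ once $C>K$. That one observation replaces the entire problematic final paragraph and closes the proof; the convex-hull reconciliation you were wrestling with never needs to be addressed.
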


\begin{proof} Let $B_C(y)$ be the ball with center on the line
$x_1=a$ which is tangent to $\{u=\sigma\}$ at $x_0$, with $y \in
A_{C,\sigma}$. By Lemma \ref{easyHarnack} there exists $$x_1 \in
\J_1 \cap B_{K}(x_0).$$ Then the desired point is the intersection
of $[x_0,y]$ with $\p E_{C,\sigma}$, provided that $C > K.$
\end{proof}

\begin{lem}\label{halfplane}
If~$C$ is large enough,
the set $E:=E_{C,\sigma}$ is a half-plane.\end{lem}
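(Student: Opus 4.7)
The plan is to establish that $E$, being the convex hull of $A_{C,\sigma}$, has enough symmetry and boundary control to force it to be the subgraph of an affine function. The starting observation is that $A_{C,\sigma}$ is closed under the translation $x\mapsto x-te_2$ for $t\geq 0$: this is immediate from the definition, since the ball-inclusions defining $A_{C,\sigma}$ are preserved under downward translation. The convex hull $E$ inherits this invariance, and combined with convexity in $\R^2$ this pins $E$ down as $E=\{(x_1,x_2):\,x_2\leq f(x_1)\}$ for some concave function $f:J\to\R\cup\{+\infty\}$ defined on an interval $J\subseteq\R$.

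Next I would use Lemma \ref{intersect1} to show that $f$ is a finite, continuous, real-valued concave function on all of $\R$. Indeed, that lemma asserts that every vertical strip of width $2C$ meets $\partial E$ at a point within distance less than $2C$ of $\J_1$, so $J$ is $C$-dense in $\R$ and hence $J=\R$. Since $E\subset\{u<1\}\subset\{x_2<\phi_1(x_1)\}$ we have $f\leq \phi_1$; thus $f$ is finite at a dense set of points and therefore real-valued and continuous everywhere by concavity.

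It remains to show $f$ is affine, whence $E$ is the half-plane $\{x_2\leq \alpha x_1+\beta\}$. I would argue by contradiction: suppose $f$ is strictly concave at some point, producing $x_L<x_0<x_R$ with $f(x_0)$ strictly exceeding the chord $\tilde f$ from $(x_L,f(x_L))$ to $(x_R,f(x_R))$ by an amount $\delta>0$. Using Lemma \ref{intersect1}, I pick $y_L,y_R\in\J_1$ within distance $O(C)$ of $(x_L,f(x_L))$ and $(x_R,f(x_R))$, and apply Proposition \ref{lsigma} to the pair $(y_R,y_L)$ (after reordering so that the first coordinate decreases). This places $l_\sigma(y_R,y_L)$ inside $\{u>\sigma\}$, hence disjoint from $E$. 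Comparing the height of $l_\sigma(y_R,y_L)$ at the abscissa $x_0$ with the value $f(x_0)$ would then yield a contradiction, provided the concavity defect $\delta$ dominates the fixed $O(C+\sigma)$ error coming from the displacement in Lemma \ref{intersect1} and the $\sigma$-offset along the $\nu$-direction.

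The hard part is exactly this last quantitative step: the error budget $O(C)$ from Lemma \ref{intersect1} is universal but not small, while the concavity defect $\delta$ depends on the shape of $f$. To close the argument one typically chooses $x_L,x_R$ sufficiently far apart (or performs a rescaling) so that either $\delta$ becomes large enough to beat the fixed error, or one iterates the above comparison to progressively improve the linearity estimate until the inequality forces $\delta=0$, at which point $f$ is genuinely affine and $E$ is a half-plane.
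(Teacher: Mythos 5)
Your approach matches the paper's in every essential respect: downward invariance of $A_{C,\sigma}$ makes $\partial E$ the graph of a concave function $f:\R\to\R$; Lemma \ref{intersect1} supplies points of $\J_1$ near the graph at far-apart abscissae; Proposition \ref{lsigma} then pushes $l_\sigma$ of the corresponding pair into $\{u>\sigma\}$, which collides with $E=E_{C,\sigma}\subset\{u<\sigma\}$. So the skeleton is the right one.

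The single step you flag as ``the hard part'' and leave as a heuristic is exactly where the paper inserts a short but crucial observation, and you should make it explicit rather than invoking ``rescaling'' or ``iteration'' (the latter is not what happens and would not obviously work here). Since $f$ is concave on all of $\R$ and not affine, one can choose a point $s_0$ and a supporting line $p$ at $s_0$ with $p(s)-f(s)\to+\infty$ as $s\to\pm\infty$. (Pick $s_0$ where some supporting slope lies strictly between $\lim_{s\to-\infty}f'(s)$ and $\lim_{s\to+\infty}f'(s)$, which are distinct because $f$ is not affine.) For the chord of $f$ between $-a$ and $a$, the affine function $p-\text{chord}$ attains its minimum on $[-a,a]$ at an endpoint, where it equals $p(\pm a)-f(\pm a)$; hence $f(s_0)-\text{chord}(s_0)\ge\min_{\pm}\big(p(\pm a)-f(\pm a)\big)\to+\infty$. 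This is precisely the statement that your concavity defect $\delta$ grows without bound as $x_L,x_R$ spread apart, so it eventually dominates the fixed $O(C)+\sigma$ error coming from Lemma \ref{intersect1} and the $\sigma$-offset in $l_\sigma$, and your first closing route goes through. With that inserted, your proposal is essentially the paper's proof.
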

\begin{proof}
Since $E$ is a convex set and for any $x_0 \in \R^2$ there exists
$t_0$ such that $x_0 + te_2 \in E$ for all $t \leq t_0$, then $\p
E$ is the graph of a concave function $f$ i.e.
\begin{equation}\label{limit}\p E = \{(s, f(s)), s \in \R\}.\end{equation}

If $f$ is not linear, we can find a linear function $p(s)$ tangent
to $f$ at some $s_0$ such that $$\lim_{s\to\pm\infty} (p(s)-
f(s))=+\infty.$$

{F}rom Lemma \ref{intersect1} for each $n\in \mathbb{N}$ there
exist points $$y_n \in \{x \ | \ (n-3)C \leq x \cdot e_1 \leq
(n+3)C\} \cap \J_1$$  $$z_n \in \{x \ | \ -(n+3)C \leq x \cdot e_1
\leq -(n-3)C\} \cap \J_1$$ that are at distance at most $2C$ from
the graph of $f$.

{F}rom \eqref{limit} we see that as $n \to \infty$, the point
$(s_0,
f(s_0))$ is at an arbitrary large distance above the line passing
through $y_n, z_n.$

Thus, when this distance becomes greater than $2\sigma$,
in the light of~\eqref{J},
we can find a point $$(s_0, t) \in l_\sigma(y_n,z_n), \quad
\mbox{for some $t < f(s_0)$}.$$
{F}rom Proposition \ref{lsigma} we
know that $$(s_0, t) \in \{u > \sigma\}.$$
On the other hand, since $t<f(s_0)$,
$$(s_0,t) \in E
\subset \{u < \sigma\}.$$

Thus we reached a contradiction.
This shows that~$f$ is a linear function.
\end{proof}

\begin{cor}\label{star}
$\{u=\sigma\}$ is contained in a strip. The direction $\nu$ of the
strip satisfies~$\nu_2\ne 0$. Furthermore, $\Omega$ is also
contained in a strip in the $\nu$ direction.
\end{cor}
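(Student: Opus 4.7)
By Lemma \ref{halfplane}, $\partial E_{C,\sigma}$ is the graph of a linear function $f$; call this line $L_0=\{x:x\cdot\nu=c_0\}$ with $\nu$ its outward unit normal. Since $L_0$ is a non-vertical graph over $x_1$, we have $\nu_2>0$, which settles the second assertion and identifies $\nu$ as the candidate strip direction. The inclusion $E=\{x\cdot\nu\le c_0\}\subset\{u<\sigma\}$ then gives one side of the strip for free: $\{u=\sigma\}\subset\{x\cdot\nu\ge c_0\}$.

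For the matching upper bound on $\{u=\sigma\}$, I would combine Lemma \ref{intersect1} with Proposition \ref{lsigma}. Lemma \ref{intersect1} produces, for each $n\in\N$, points $y_n,z_n\in\J_1$ with $y_n\cdot e_1$ near $+n$, $z_n\cdot e_1$ near $-n$, and both within universal distance $O(C)$ of $L_0$. For $n$ large the chord through $y_n,z_n$ is $O(C)$-close to $L_0$ and its unit perpendicular $\nu_n$ tends to $\nu$. Proposition \ref{lsigma} then places $l_\sigma(y_n,z_n)\subset\{u>\sigma\}$: a line segment whose $\nu$-coordinate stays in an $O(C)$-neighbourhood of $c_0+\sigma$ and whose $e_1$-projection exhausts $\R$ as $n\to\infty$. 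Trapping $\{u=\sigma\}$ between $E\subset\{u<\sigma\}$ and these long $l_\sigma$-segments inside $\{u>\sigma\}$ confines it to $\{c_0\le x\cdot\nu\le c_0+\sigma+O(C)\}$, a strip with normal $\nu$.

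For the strip containing $\Omega$, Lemma \ref{intersect1} already locates $\J_1$ within $O(C)$ of $L_0$ at every horizontal position. To get analogous control on $\J_0$, I would rerun the construction of $A_{R,\sigma}$ and its convex hull from the complementary direction, using the subsolution barrier $\beta^-$ of Proposition \ref{bar2} in place of $\beta^+$ and sliding from above as in Proposition \ref{det c}; this should yield a second line parallel to $L_0$ close to $\J_0$, placing $\Omega$ between two curves within bounded distance of $L_0$, hence in a strip with normal $\nu$. The main obstacle is making the sandwich for $\{u=\sigma\}$ rigorous: since each $l_\sigma(y_n,z_n)$ sits only approximately at $\nu$-height $c_0+\sigma$ and $u$ is not known to be monotone in the $\nu$-direction, one must exploit simultaneously the density of the endpoints $y_n,z_n$ along $L_0$ and the tube-like accumulation of $A_{C,\sigma}$ near $L_0$ built into the definition of $E$ to exclude that $\{u=\sigma\}$ escapes above all the $l_\sigma$-segments.
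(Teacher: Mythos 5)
The student's opening step is the same as the paper's (Lemma \ref{halfplane} gives the half-plane $E=\{x\cdot\nu\le c_0\}\subset\{u<\sigma\}$, hence the one-sided bound $\{u=\sigma\}\subset\{x\cdot\nu\ge c_0\}$, and $\nu_2\neq 0$ since $\partial E$ is a graph over $x_1$). The divergence, and the gap, is in the upper bound on $\{u=\sigma\}$. The paper does not attempt to extract the upper bound from $\J_1$ and Proposition \ref{lsigma}; it simply invokes the symmetric counterpart of \eqref{83} and Lemma \ref{halfplane} in $\{u>\sigma\}$ --- i.e.\ one runs the same construction with balls compactly contained in $\{u>\sigma\}$, the subsolution barrier $\underline\beta_{R,\sigma}$, sliding downward instead of upward, and the Proposition \ref{convexhull}-type convexity --- to produce a second half-plane $E'\subset\{u>\sigma\}$. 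The level set $\{u=\sigma\}$ is then sandwiched between the two half-planes, giving the strip directly. No monotonicity is needed.

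Your route, by contrast, tries to bound $\{u=\sigma\}$ from above using the one-dimensional $l_\sigma(y_n,z_n)$ segments produced by Lemma \ref{intersect1} and Proposition \ref{lsigma}. You correctly flag the obstruction: those segments are curves, not regions, and without monotonicity of $u$ in the $\nu$ (or $e_2$) direction --- which is only proven \emph{after} Corollary \ref{star}, and in fact uses it --- a point of $\{u=\sigma\}$ can lie above all the $l_\sigma$-segments without contradiction. The vague remedy you suggest ("exploit the density of endpoints and the tube-like accumulation of $A_{C,\sigma}$") is not a proof, and I do not see how to make it one short of rebuilding the symmetric half-plane, which is exactly the paper's fix. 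Note also that Lemma \ref{intersect1} only guarantees \emph{some} point of $\J_1$ in each vertical strip is within $2C$ of $\partial E$; it does not say $\J_1$ is everywhere close to $\partial E$, so the "locates $\J_1$ within $O(C)$ of $L_0$ at every horizontal position" step you invoke for the $\Omega$-strip is also not what the lemma gives. The paper handles the $\Omega$-inclusion instead by applying Lemma \ref{easyHarnack}$(i)$ (and its counterpart) with a fixed large $R$: $u\le\bar\beta_{R,\sigma}(\cdot-x_*)$ forces $u\equiv 0$ on $B_{R_0}(x_*)$, and since $R-R_0\approx\sigma$, $\J_0$ stays within a universal distance of $\{u=\sigma\}$, hence of the strip; symmetrically for $\J_1$. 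Your idea of "rerunning the construction with $\beta^-$" is pointed in roughly the right direction but is not worked out, and as written it conflates the sub/supersolution roles (the barrier controlling $\J_0$ from below is still the supersolution $\bar\beta_{R,\sigma}$, via the fact that it vanishes on $B_{R_0}$).

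In short: a genuine gap in the upper bound on $\{u=\sigma\}$, which you yourself flag; the fix is to run the convex-hull/half-plane machinery symmetrically in $\{u>\sigma\}$ rather than to argue through $\J_1$ and $l_\sigma$.
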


\begin{proof} {F}rom \eqref{83},
Lemma \ref{halfplane} (and their counterpart in $\{u>\sigma\}$),
we conclude that $\{u=\sigma\}$ is contained in a strip, say $\{a
\le x\cdot \nu \le b\}$. Also, $\nu_2 \neq 0$ since $\{u=\sigma\}$
is trapped between $\J_0$ and $\J_1$ which are graphs in the $e_2$
direction.

Now, applying Lemma \ref{easyHarnack}$(i)$ together with its
counterpart in $\{u>\sigma\}$, with $R$ a fixed large constant, we
conclude that $\J_0$ is above the line $\{x \cdot \nu = a -C \}$
while $\J_1$ is below the line $\{x \cdot \nu = b+ C \}$, for some
$C$ universal. This concludes the proof of the corollary.
\end{proof}

Next, we show that the width of the strip obtained in
Corollary~\ref{star} is arbitrarily small and hence $\{u=\sigma\}$
is a line, which concludes the proof of Theorem \ref{main}.

As a preliminary step in this direction, we prove that $u$ is
monotone in the vertical direction, so to be able to apply our
Harnack inequality (this is also a nice consequence of
Corollary~\ref{star}).

\begin{prop} $u$ is monotone increasing in the $e_2$ direction.
\end{prop}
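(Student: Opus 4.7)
The plan is to deploy the sliding method of Section~2 in the direction $e_2$. Setting $u_t(x):=u(x-te_2)$, monotonicity of $u$ in $e_2$ is equivalent to $u_t\le u$ in $\R^2$ for every $t\ge 0$, and this is what I would prove.

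First I would initialize the sliding using Corollary~\ref{star}: up to a sign choice, $\Omega\subset\{a\le x\cdot\nu\le b\}$ with $\nu_2>0$, and the extension conventions (combined with the observation that $\{x\cdot\nu<a\}\subset\{x_2<\phi_0(x_1)\}$ and $\{x\cdot\nu>b\}\subset\{x_2>\phi_1(x_1)\}$, which follows from the strip inclusion and the graph description of $\J_0,\J_1$) force $u\equiv 0$ on $\{x\cdot\nu<a\}$ and $u\equiv 1$ on $\{x\cdot\nu>b\}$. Letting $T:=(b-a)/\nu_2$, for every $t>T$ one checks $u_t\le u$ for free: if $x\cdot\nu\le b$ then $(x-te_2)\cdot\nu<a$, so $u_t(x)=0\le u(x)$; if $x\cdot\nu>b$ then $u(x)=1\ge u_t(x)$.

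Next I would slide $t$ downward. Define $t_*:=\inf\{t\ge 0:u_s\le u \text{ for all } s\ge t\}\in[0,T]$, so $u_{t_*}\le u$ by continuity, and the goal is $t_*=0$. Suppose for contradiction $t_*>0$. By definition there are $s_n\uparrow t_*$ and points $x_n$ with $u_{s_n}(x_n)>u(x_n)$. A short case analysis with the extension rules forces $x_n\in\Omega$, $x_n-s_ne_2\in\Omega$, and $u(x_n)\in(0,1)$: contacts on or above $\J_1$ are ruled out since $t_*>0$ would require $x_n-s_ne_2$ strictly above $\J_1$ and hence $u(x_n)=1$, contradicting the strict inequality, and contacts on or below $\J_0$ reduce to $0=0$ and cannot witness a strict violation.

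The main obstacle, and the only real difficulty, is that $\Omega$ is unbounded along $\nu^\perp$, so the $x_n$ may escape to infinity. I would normalize by letting $\tau_n:=x_n\cdot\nu^\perp$ and considering $\tilde u_n(y):=u(y+\tau_n\nu^\perp)$ on $\tilde\Omega_n:=\Omega-\tau_n\nu^\perp$, all sitting in the same $\nu$-strip. Using the uniform Lipschitz bound (Lemma~\ref{Lip}), interior $C^{1,\alpha}$ regularity for uniformly elliptic fully nonlinear equations, and viscosity stability of the free boundary condition $|\nabla u|=1$ (cf.~\cite{CS}), I would extract a subsequential local uniform limit $\tilde u$ on a limit strip-bounded domain $\tilde\Omega$, still solving~\eqref{2}, with $\tilde u_{t_*}\le \tilde u$ and a genuine interior contact $\tilde x_0\in\tilde\Omega\cap(\tilde\Omega+t_*e_2)$ at which $\tilde u(\tilde x_0)=\tilde u_{t_*}(\tilde x_0)\in(0,1)$.

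Finally I would close by the strong maximum principle. On the connected component of $\tilde\Omega\cap(\tilde\Omega+t_*e_2)$ containing $\tilde x_0$, the nonnegative function $w:=\tilde u-\tilde u_{t_*}$ satisfies $\mathcal{P}^{-}_{\lambda,\Lambda}(D^2w)\le 0$ (since both $\tilde u$ and $\tilde u_{t_*}$ solve $F(D^2\cdot)=0$), and vanishes at $\tilde x_0$, so it vanishes on the whole component. Choosing $y$ on $\tilde\J_0+t_*e_2$ with the same first coordinate as $\tilde x_0$, the relation $\tilde\phi_0((\tilde x_0)_1)+t_*<\tilde\phi_1((\tilde x_0)_1)$ (immediate from $\tilde x_0\in\tilde\Omega\cap(\tilde\Omega+t_*e_2)$) gives $y\in\tilde\Omega$; yet $\tilde u(y)=\tilde u_{t_*}(y)=\tilde u(y-t_*e_2)=0$ because $y-t_*e_2\in\tilde\J_0$, contradicting $\tilde u>0$ on $\tilde\Omega$. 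Hence $t_*=0$, i.e.\ $u(x)\ge u(x-te_2)$ for every $t\ge 0$.
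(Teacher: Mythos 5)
Your approach is a genuine departure from the paper's. You run the sliding method directly on $u$ itself (translating in $e_2$ and trying to produce an interior touching point for the strong maximum principle), whereas the paper works with the \emph{derivative} $u_2 = \partial_{x_2}u$: since $u_2$ solves the linearized equation $F_{ij}(D^2u)(u_2)_{ij}=0$ and is nonnegative on $\partial\Omega$ (by the overdetermined conditions and the graph structure of $\J_0,\J_1$), a Phragm\'en--Lindel\"of comparison with an explicit barrier of the form $w(s,t)=Ka^{-2}e^{-L\delta}\bigl(e^{-\delta s}+e^{\delta s}\bigr)(t^2-2a^2)$ in the strip $\{|x\cdot\nu|\le a\}$, followed by letting $L\to\infty$, yields $u_2\ge 0$ directly, with no compactness needed. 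Your route is more structural and avoids differentiating the equation, but it pays for that by having to control a noncompact sliding.

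And that is where the gap is. You assume that the normalized sequence of near-contact points $\tilde x_n$ produces in the limit a \emph{genuine interior} contact $\tilde x_0\in\tilde\Omega\cap(\tilde\Omega+t_*e_2)$ with $\tilde u(\tilde x_0)=\tilde u_{t_*}(\tilde x_0)\in(0,1)$. This is asserted, not proved, and it is exactly the point that can fail: although each $x_n$ satisfies $u(x_n)\in(0,1)$ and $x_n,\,x_n-s_ne_2\in\Omega$, nothing prevents $u(x_n)\to 0$ (or $\to 1$), i.e.\ the violating points can slide off to the free boundary as $n\to\infty$. If $\tilde x_0\in\tilde\J_0$, then $\tilde u(\tilde x_0)=0$ and $\tilde x_0-t_*e_2$ lies strictly below $\tilde\J_0$ (since $t_*>0$), where $\tilde u\equiv 0$; there is then no violation to propagate by the strong maximum principle, and the contradiction disappears. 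Symmetrically for $\tilde x_0-t_*e_2$ on $\tilde\J_1$. To close this you would need a uniform nondegeneracy estimate showing the contact cannot escape to the free boundary, or a Hopf-type boundary argument on the limiting configuration -- neither of which you supply, and neither of which is immediate, since you have no uniform control on the limiting graphs $\tilde\phi_0,\tilde\phi_1$ beyond the strip inclusion. (There is a secondary, related issue: ``the limit still solves~\eqref{2}'' requires that the free-boundary structure survives the passage to the limit, which is not automatic from local uniform convergence alone.) By contrast, the paper's barrier argument is insensitive to all of this: it works purely inside $\Omega$, uses only the uniform Lipschitz bound on $u_2$ and the sign of $u_2$ on $\partial\Omega$, and the unboundedness of $\Omega$ in $\nu^\perp$ is handled by the exponential growth of the barrier rather than by compactness.
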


\begin{proof} Assume for simplicity that $u \in C^3(\Omega)$ and $F \in C^1.$

{F}rom Corollary~\ref{star} we have that~$\Omega$ is included in a
strip, say $$S:=\{-a \leq x \cdot \nu \leq a\}.$$

Notice that $$u_2:=\frac{\partial u}{\partial x_2} \geq 0 \quad
\text{near $\partial \Omega$}$$ and by Lipschitz continuity (see
Lemma \ref{Lip})$$u_2 \geq -K \quad \text{in $\Omega$}$$ with
$K>0$ universal. Also, $u_2$ solves the linearized equation
$$a_{ij}(x)(u_2)_{ij}=0 \quad \text{in $\Omega$}$$ with
$$a_{ij}(x)= F_{ij}(D^2u).$$

We compare $u_2$ with $W(x)= w(x\cdot \nu^\perp, x\cdot\nu)$ in
the domain $\Omega \cap \{|x\cdot \nu^\perp | < L\}$, where
$$ w(s,t)= K a^{-2}e^{-L\delta}(e^{-\delta s}+ e^{\delta s})(t^2-2a^2).$$
Here~$L>0$ is fixed and it will be taken large in the sequel.

Notice that
\begin{equation}\label{AB}{\mbox{
$W\le0$ in~$S$}}\end{equation} and also
$$\lambda |(D^2w)^+| - \Lambda|(D^2w)^-| > 0$$ if
$\delta$ is small depending on $\lambda, \Lambda$.

This implies
that
$$a_{ij}(x)W_{ij}(x)>0.$$
Since $${\mbox{$W \leq 0$ on $\p
\Omega,$}}$$ due to~\eqref{AB},
and $$W \leq - K \quad \text{in $S \cap
\{|x \cdot
\nu^\perp| =L\}$}$$ we conclude from the maximum principle that
$$u_2 \geq W \quad \text{in $\Omega \cap \{|x\cdot \nu^\perp| < L\}$}.$$
On $\{|x\cdot \nu^\perp| \le L/2\} \cap \Omega$ we obtain that
$u_2 \geq -4Ke^{-3\delta L/2}$ and by letting $L \to \infty$ we
obtain that in fact $u_2 \geq 0$ in~$\Omega$.

The smoothness assumptions on $u$ and $F$ can be removed with the
techniques of \cite{CC}.
\end{proof}

\ Now, we are ready to complete the proof of our main Theorem.

\begin{proof}[Proof of Theorem \ref{main}]
The idea of the proof is that once~$\{u=\sigma\}$
is included in some strip of width, say,~$d>0$,
then it is included in a smaller strip, say of width~$3d/4$.
{F}rom this, of course, it would follow that~$\{u=\sigma\}$
is a line.

Here are the details of
such an improvement of
flatness.

By Corollary~\ref{star}, we may assume that
\begin{equation}\label{Z}
\{u=\sigma\}
\subset \{0 < x \cdot \nu < d\},\end{equation}
for some direction
$\nu$ with
$\nu_2>0$.

Let $B_R(x_*)$ be a large ball with center on the line $t\nu$
which touches $\{u=\sigma\}$ from below at $x_0$.
Let $\nu^*= (x_0-x_*)/R$.

We distinguish
two cases.

\textit{Case 1.} $x_0\cdot \nu > d/2.$ In this case the point
$$y_0:= x_0 - \frac{2C}{R}\nu^*
$$ with $C$ as in Proposition \ref{convexhull}
satisfies
\begin{equation}\label{ZZ}
y_0\cdot\nu>\frac d 4\end{equation}
as long as $R$ is large enough.

By applying Proposition \ref{convexhull} to the ball $B_R(x_*)$
and all
the balls of radius $R$ tangent from below to $\{x \cdot \nu
=0\}$, which lie in~$\{ u<\sigma\}$ because of~\eqref{Z},
we find
that $\{u=\sigma \}$ is above the line $\{x \cdot \nu = y_0\}$.

So, by~\eqref{ZZ},\begin{equation}\label{AA1}{\mbox{
$\{u=\sigma \}$ is above the line $\{x
\cdot \nu = d/4\}.$}}\end{equation}

\

\textit{Case 2.} $x_0 \cdot \nu \leq d/2.$ In this case,
the
point
$$y_d:= x_0 + \frac{\bar C}{R}\nu^* + \frac{2C}{M- \bar
C/R}\nu^*$$
satisfies
\begin{equation}\label{CC}
y_d\cdot\nu<\frac{3d}4\end{equation}
as long as
$R$ and $M$ are large (again $C$ is as in Proposition
\ref{convexhull}).
Notice also that $y_d$ is in the ball of radius
$$ M-\frac{\bar C}{R}- \frac{C}{M-\bar C/R}$$ centered at
$(M\nu^*+x_0)$.

Also, from Lemma \ref{Harnack},
$$B_{M-\frac{\bar
C}{R}}(M\nu^*+x_0) \subset \{u>\sigma\}$$ if $R$ is large.

Therefore,
by applying Proposition \ref{convexhull} (upside-down) to the ball
$B_{M-\frac{\bar C}{R}}(M\nu^*+x_0)$  and all the balls of the
same radius $R$ tangent from above to $\{x \cdot \nu =d\}$ we find
that $\{u=\sigma \}$ is below the line $\{x \cdot \nu = y_d \cdot
\nu \}.$

This and~\eqref{CC} give that
\begin{equation}\label{AA2}{\mbox{
$\{u=\sigma \}$ is below the line $\{x
\cdot \nu = 3d/4\}.$}}\end{equation}

\

In either cases, from~\eqref{Z}
and either~\eqref{AA1} or~\eqref{AA2},
we obtain that
$\{u=\sigma\}$ is
included in a
strip of width $3d/4$, which is the desired improvement of
flatness.
\end{proof}


\begin{thebibliography}{9999}

\bibitem[AC]{AC} H. W. Alt and L. A. Caffarelli, {\it Existence and regularity
for a minimum problem with free boundary}, J. Reine Angew. Math.,
325:105--144, 1981.

\bibitem[CC]{CC} L. A. Caffarelli and X. Cabre, {\it Fully nonlinear
elliptic equations}, Volume 43 of American Mathematical Society
Colloquium Publications, American Mathematical Society,
Providence, RI, 1995.

\bibitem[CS]{CS}
L.A. Caffarelli and S. Salsa, \emph{A geometric approach to free
boundary problems}, GSM 68, American Mathematical Society,
Providence, Rhode Island, 2005.

\bibitem[DS]{DS} D. De Silva and O. Savin, {\it Symmetry of global
solutions to a class of fully nonlinear elliptic equations in 2D},
To appear in Indiana Univ. Math. J., 2008.

\bibitem[DM]{DM} J. Dolbeault and R. Monneau, {\it
On a Liouville type theorem for isotropic homogeneous fully
nonlinear elliptic equations in dimension two},
Ann. Sc. Norm. Super. Pisa Cl. Sci. (5) 2(1):181--197, 2003.

\bibitem[FV]{FV} A. Farina and E. Valdinoci,  {\it Flattening results
for elliptic PDEs in unbounded domains with applications to
overdetermined problems,} Preprint, 2008.

\bibitem[S]{S} O. Savin, {\it Phase transitions: Regularity of at level
sets,} Ph.D thesis, Univeristy of Texas at Austin 2003. To appear
in Ann. of Math., January 2009.


\bibitem[V1]{V1} E. Valdinoci, {\it Bernoulli jets and the zero mean
curvature equation}, J. Differential Equations, 225(2):710--736,
2006.

\bibitem[V2]{V2} E. Valdinoci, {\it Flatness of Bernoulli jets}, Math.
Z., 254(2):257--298, 2006.

\end{thebibliography}
\end{document}